\newtheorem{thm}{Theorem}[section]
\newtheorem{cor}[thm]{Corollary}
\newtheorem{lem}[thm]{Lemma}
\newtheorem{prop}[thm]{Proposition}
\theoremstyle{definition}
\newtheorem{defn}[thm]{Definition}
\theoremstyle{remark}
\numberwithin{equation}{section}
\newcommand{\norm}[1]{\left\Vert#1\right\Vert}
\newcommand{\abs}[1]{\left\vert#1\right\vert}
\newcommand{\set}[1]{\left\{#1\right\}}
\newcommand{\eps}{\varepsilon}
\newcommand{\dbar}{\bar\partial}
\newcommand{\ddbar}{\partial\bar\partial}
\DeclareMathOperator{\re}{Re}
\DeclareMathOperator{\im}{Im}
\DeclareMathOperator{\supp}{supp}
\DeclareMathOperator{\dist}{dist}
\begin{document}

\title[On Competing Definitions for the Diederich-Forn{\ae}ss Index]{On Competing Definitions for the Diederich-Forn{\ae}ss Index}%
\author{Phillip S. Harrington}%
\address{SCEN 309, 1 University of Arkansas, Fayetteville, AR 72701}%
\email{psharrin@uark.edu}%

\subjclass[2010]{32U10, 32T35}

\begin{abstract}
Let $\Omega\subset\mathbb{C}^n$ be a bounded pseudoconvex domain.  We define the Diederich-Forn{\ae}ss index with respect to a family of functions to be the supremum over the set of all exponents $0<\eta<1$ such that there exists a function $\rho_\eta$ in this family that is comparable to $-\dist(z,b\Omega)$ on $\Omega$ and such that $-(-\rho_\eta)^\eta$ is plurisubharmonic on $\Omega$.  We first prove that computing the Diederich-Forn{\ae}ss index with respect to the family of upper semi-continuous functions is the same as computing the Diederich-Forn{\ae}ss index with respect to the family of Lipschitz functions.  When the boundary of $\Omega$ is $C^k$, $k\geq 2$, we prove that the Diederich-Forn{\ae}ss index with respect to the family of $C^k$ functions is the same as the Diederich-Forn{\ae}ss index with respect to the family of $C^2$ functions.
\end{abstract}
\maketitle

\tableofcontents


\section{Introduction}

A domain $\Omega\subset\mathbb{C}^n$ is said to be pseudoconvex if it admits a smooth, bounded, strictly plurisubharmonic exhaustion function.  Let $\delta_\Omega(z)=\dist(z,b\Omega)$.  Oka's Lemma states that $\Omega$ is pseudoconvex if and only if $-\log\delta_\Omega$ is plurisubharmonic on $\Omega$.  For many applications, it is desirable to obtain a bounded plurisubharmonic exhaustion function.  When $\Omega$ is bounded and $b\Omega$ is $C^2$, Diederich and Forn{\ae}ss proved that there exists an exponent $0<\eta<1$ and a $C^2$ defining function $\rho_\eta$ for $\Omega$ with the property that $-(-\rho_\eta)^\eta$ is strictly plurisubharmonic on $\Omega$, thus providing a bounded plurisubharmonic exhaustion function for all such domains.  The supremum of all such exponents $\eta$ for a given domain $\Omega$ has come to be known as the Diederich-Forn{\ae}ss index for $\Omega$, although in the present paper we will identify this as the strong Diederich-Forn{\ae}ss index.

In contrast, we will also consider the weakest notion of the Diederich-Forn{\ae}ss index that makes sense:
\begin{defn}
  Let $\Omega\subset\mathbb{C}^n$ be a bounded pseudoconvex domain.  We define the weak Diederich-Forn{\ae}ss index $DF_w(\Omega)$ to be the supremum over all exponents $0<\eta<1$ such that there exists an upper semi-continuous plurisubharmonic function $\lambda_\eta$ on $\Omega$ satisfying
  \[
    -c(\delta_\Omega)^\eta\leq\lambda_\eta\leq-(\delta_\Omega)^\eta
  \]
  on $\Omega$ for some $c>1$.  If no such function exists, then we say that $DF_w(\Omega)=0$.
\end{defn}
We could also include an arbitrary constant in the upper bound on $\lambda_\eta$, but we may always re-scale $\lambda_\eta$ so that this constant is equal to one.  By a result of Richberg \cite{Ric68}, we may assume that $\lambda_\eta\in C^\infty(\Omega)$, but we wish to examine the regularity properties of $\lambda_\eta$ on $\overline\Omega$ more closely.

The author has shown that the weak Diederich-Forn{\ae}ss index is positive when $b\Omega$ is Lipschitz in \cite{Har08a}.  This was motivated by work of Berndtsson and Charpentier \cite{BeCh00}, in which they show that the Bergman Projection is regular in the Sobolev space $W^s(\Omega)$ whenever $\Omega\subset\mathbb{C}^n$ is a bounded Lipschitz domain and $0\leq s<\frac{1}{2}DF_w(\Omega)$.

The original result of Diederich and Forn{\ae}ss is, \textit{a priori}, much stronger.  To clarify this, we introduce the following family of definitions:
\begin{defn}
  Let $\Omega\subset\mathbb{C}^n$ be a bounded pseudoconvex domain.  Let $\mathcal{F}(\overline\Omega)$ be a family of functions on $\overline\Omega$ that are at least upper semi-continuous.  We define $DF_{\mathcal{F}}(\Omega)$ to be the supremum over all exponents $0<\eta<1$ such that there exists a function $\rho_\eta\in\mathcal{F}(\overline\Omega)$ satisfying
  \[
    -c\delta_\Omega\leq\rho_\eta\leq-\delta_\Omega
  \]
  on $\overline\Omega$ for some $c>1$ and such that $-(-\rho_\eta)^\eta$ is plurisubharmonic on $\Omega$.  If no such function exists, then we say that $DF_{\mathcal{F}}(\Omega)=0$.
\end{defn}
We clearly have $DF_{\mathcal{F}}(\Omega)\leq DF_w(\Omega)$.  If $b\Omega$ is $C^1$ and $\mathcal{F}(\overline\Omega)\subset C^1(\overline\Omega)$, then $\rho_\eta$ necessarily extends to a defining function for $\Omega$.  If $\mathcal{F}=\Lambda^1(\overline\Omega)$, the space of Lipschitz functions on $\Omega$, then it is not necessarily true that $\nabla\rho_\eta$ is uniformly bounded away from zero almost everywhere, and hence $\rho_\eta$ may not be a Lipschitz defining function.  For example, if $\rho$ is a smooth defining function for a smooth domain, then $\tilde\rho=\rho+C\rho^2\sin(\rho^{-1})$ is a Lipschitz function on $\Omega$ that is comparable to $-\delta_\Omega$ near $b\Omega$, but for $C$ sufficiently large the gradient of $\tilde\rho$ will not be uniformly bounded away from zero on any neighborhood of $b\Omega$.

With this terminology, we can easily restate many results on the Diederich-Forn{\ae}ss index.  Diederich and Forn{\ae}ss originally proved that $DF_{C^2}(\Omega)>0$ whenever $\Omega$ has $C^2$ boundary.  In \cite{Ran81}, Range simplified their proof to show that $DF_{C^3}(\Omega)>0$ whenever $\Omega$ has $C^3$ boundary.  In \cite{HeFo07} and \cite{HeFo08}, Herbig and Forn{\ae}ss prove that $DF_{C^\infty}(\Omega)=1$ whenever $\Omega$ has a smooth boundary and admits a smooth defining function $\rho$ that is plurisubharmonic on the boundary.  Krantz, Liu, and Peloso \cite{KLP18} looked at more general sufficient conditions for $DF_{C^\infty}(\Omega)=1$ when $\Omega\subset\mathbb{C}^2$, and Liu generalized this study to $\mathbb{C}^n$ in \cite{Liu19}.  Kohn \cite{Koh99} and Pinton and Zampieri \cite{PiZa14} have used $DF_{C^\infty}(\Omega)$ to prove regularity for the Bergman projection and the $\bar\partial$-Neumann operator in Sobolev spaces, and the author \cite{Har11} and Liu \cite{Liu19b} have carried out similar work in the special case when $DF_{C^\infty}(\Omega)=1$.  Abdulsahib and the author \cite{AbHa19} have estimated $DF_{C^\infty}(\Omega)$ on Hartogs domains with smooth boundaries.

Our main goal for this paper is to clarify the relationship between these various indices as follows:
\begin{thm}
  Let $\Omega\subset\mathbb{C}^n$ be a bounded, pseudoconvex domain.  Then $DF_{\Lambda^1}(\Omega)=DF_w(\Omega)$.  If $b\Omega$ is $C^k$ for some $k\geq 2$, then $DF_{C^k}(\Omega)=DF_{C^2}(\Omega)$.
\end{thm}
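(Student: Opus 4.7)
The inequality $DF_{\Lambda^1}(\Omega) \leq DF_w(\Omega)$ follows immediately from $\lambda_\eta := -(-\rho_\eta)^\eta$: whenever $\rho_\eta \in \Lambda^1(\overline\Omega)$ realizes $\eta$, the function $\lambda_\eta$ is continuous, plurisubharmonic on $\Omega$, and satisfies $-c^\eta\delta_\Omega^\eta \leq \lambda_\eta \leq -\delta_\Omega^\eta$. For the reverse, fix $\eta < DF_w(\Omega)$ with a plurisubharmonic $\lambda_\eta$ satisfying $-c\delta_\Omega^\eta \leq \lambda_\eta \leq -\delta_\Omega^\eta$, and set $\rho_\eta := -(-\lambda_\eta)^{1/\eta}$. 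Then automatically $-(-\rho_\eta)^\eta = \lambda_\eta$ and $-c^{1/\eta}\delta_\Omega \leq \rho_\eta \leq -\delta_\Omega$; the only remaining issue is that $\rho_\eta$ should be Lipschitz on $\overline\Omega$. A chain-rule computation shows that $|\nabla\rho_\eta|$ is bounded precisely when $|\nabla\lambda_\eta(z)| \leq C\delta_\Omega(z)^{\eta-1}$. I would produce a $\lambda_\eta$ with this property by a scale-adapted smoothing: for each $z_0 \in \Omega$, convolve with a standard mollifier at scale comparable to $\delta_\Omega(z_0)$, observing that the uniform bound $|\lambda_\eta| \leq c\delta_\Omega^\eta$ together with the standard convolution estimate $|\nabla(\lambda_\eta \ast \phi_\epsilon)(z_0)| \leq C\sup_{B(z_0,\epsilon)}|\lambda_\eta|/\epsilon$ yields precisely $|\nabla\lambda_\eta(z_0)| \lesssim \delta_\Omega(z_0)^{\eta-1}$ on the local piece. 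Patching these local smoothings into a single globally plurisubharmonic function via Richberg's regularized-maximum construction preserves both the bounds and the gradient estimate, producing the required $\lambda_\eta$.

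For the second equality, $DF_{C^k}(\Omega) \leq DF_{C^2}(\Omega)$ is trivial. For the reverse, the key observation is that $-(-\rho)^\eta$ is plurisubharmonic if and only if, for every $X \in \mathbb{C}^n$,
\[
  \partial\bar\partial\rho(X,\bar X) + \frac{1-\eta}{-\rho}\bigl|\partial\rho(X)\bigr|^2 \geq 0,
\]
an inequality that becomes strictly weaker as $\eta$ decreases. Fix $\eta < \eta' < DF_{C^2}(\Omega)$ and let $\rho_{\eta'} \in C^2(\overline\Omega)$ realize $\eta'$. Then $\rho_{\eta'}$ satisfies the displayed inequality for $\eta$ with strict slack of order $(\eta'-\eta)/\delta_\Omega$ near $b\Omega$. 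Using a $C^k$ defining function $r$ (available because $b\Omega$ is $C^k$), I would write $\rho_{\eta'} = rh$ with $h \in C^1(\overline\Omega)$ positive, and construct a $C^k$ approximant $\tilde h$ of $h$ by mollification at a variable scale $\epsilon(z) \sim (-r(z))^2$. Then $\tilde\rho := r\tilde h \in C^k(\overline\Omega)$ is comparable to $-\delta_\Omega$, and the scale is tuned so that the dominant perturbation term $r\,\partial\bar\partial(\tilde h - h) = O(r/\epsilon) = O(1/(-r))$ matches the slack $1/(-\rho_{\eta'})$, while the lower-order terms in $\partial\bar\partial\tilde\rho - \partial\bar\partial\rho_{\eta'}$ and $|\partial\tilde\rho|^2 - |\partial\rho_{\eta'}|^2$ are strictly smaller than this slack.

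The principal obstacles are technical. In Part 1, the basic gradient control available for a generic plurisubharmonic function is only integral rather than pointwise, so the construction must combine mollification at the correct local scale with a patching that preserves plurisubharmonicity; Richberg's regularized maximum is the natural tool for this. In Part 2, the choice of mollification scale is delicate: too fine a scale produces Hessian perturbations that exceed the slack, while too coarse a scale fails to make $\tilde h$ approximate $h$ in the required sense, so the scale $(-r)^2$ must be verified to lie in the correct range uniformly across $\overline\Omega$.
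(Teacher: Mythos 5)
Your first half starts from the right idea and is close in spirit to the paper's proof (mollify $\lambda_\eta$ at scale comparable to $\delta_\Omega$, then patch with a regularized maximum), but the patching step as you state it has a genuine gap. The scale-$\eps$ mollification only satisfies $-c\delta_\Omega^\eta\leq\lambda\ast\chi_\eps\leq-(\delta_\Omega-\eps)^\eta$, so two mollifications at consecutive scales are in general not ordered on the overlap of the shells where they are used: near a seam at $\delta_\Omega\approx t$ you would need an inequality like $-(\delta_\Omega-\eps_j)^\eta\leq-c\delta_\Omega^\eta$, which is impossible for $c>1$. The regularized maximum yields a smooth plurisubharmonic function retaining the bound $\leq-\delta_\Omega^\eta$ only if, near each seam, one piece dominates the other by a definite gap (so that the regularized maximum coincides with a single piece there); otherwise you lose smoothness where a piece is discarded, or lose the upper bound through the overshoot of the regularized maximum. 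Creating that gap forces one to rescale and shift the pieces (the paper's $\sigma_\eps=A\eps^{s-\eta}\lambda_\eps-B\eps^s+\cdots$), and the key inequality \eqref{eq:b_characterization}, which beats the comparability constant $C_\eta$ by taking the ratio of consecutive scales large, is achievable only for a target exponent $s$ strictly less than $\eta$. That loss is harmless because the index is a supremum, but your claim that the patching "preserves both the bounds and the gradient estimate" at the exponent $\eta$ itself is unjustified. (Also, a bounded gradient does not by itself give Lipschitz continuity on $\overline\Omega$ when $b\Omega$ is merely the boundary of a bounded pseudoconvex domain; one needs the extra two-point argument through points far from and near to $b\Omega$, as in Proposition \ref{prop:weak_DF_Index}.)

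The second half fails for more fundamental reasons. First, a mollification of $h$ at a variable scale $\eps(z)\sim(-r(z))^2$ that shrinks to zero at $b\Omega$ cannot produce a function that is $C^k$ up to the boundary: derivatives of $\tilde h$ of order at least two blow up as $\eps(z)\to0$, so $\tilde\rho=r\tilde h$ is no smoother at $b\Omega$ than $h$ was, while $C^k$ regularity at the boundary is exactly what must be produced. Second, the slack you intend to absorb the error into, $\frac{\eta'-\eta}{-\rho}\abs{\partial\rho(X)}^2$, vanishes identically in complex tangential directions $X$, whereas the perturbation $r\,\ddbar(\tilde h-h)(X,\bar X)$ does not; so matching orders of magnitude $1/\delta_\Omega$ against $1/\delta_\Omega$ overlooks that the coefficient of the slack is zero precisely in the directions (the null directions of the Levi form) where the obstruction lives. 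This is why the paper does not regularize the quotient directly: it converts the exponent-$\eta$ condition into a differential inequality for a weight function $\varphi$ in complex tangential directions, involving the forms $\alpha_\Omega$ and $\beta_\Omega$ (Lemma \ref{lem:lambda_determinant}, Proposition \ref{prop:C2_equivalence}), transfers that inequality to $b\Omega$ along the normal flow and regularizes the weight there (Proposition \ref{prop:Ck_equivalence}), and then takes $\rho_{\tilde\eta}=e^{-t|z|^2-\varphi}\tilde\delta_\Omega$, which is $C^k$ because the signed distance function is $C^k$ near $b\Omega$ by Krantz--Parks; Weinstock's formula \eqref{eq:Weinstock} is what controls the variation of the tangential Hessian in the normal direction using only $C^2$ data. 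Each conversion costs an arbitrarily small drop in the exponent, which again is fine for the equality of the suprema, but some mechanism of this kind—handling the tangential directions and producing boundary regularity—is missing from your construction.
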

This will follow from Corollaries \ref{cor:weak_DF_index} and \ref{cor:strong_DF_Index} below.  Motivated by this, we define
\begin{defn}
  Let $\Omega\subset\mathbb{C}^n$ be a bounded pseudoconvex domain with $C^2$ boundary.  We define the strong Diederich-Forn{\ae}ss index $DF_s(\Omega)$ to be $DF_{C^2}(\Omega)$.
\end{defn}

This leaves open the question of whether $DF_w(\Omega)=DF_s(\Omega)$ for bounded pseudoconvex domains with $C^2$ boundaries.  \textit{A priori}, we observe that the strong Diederich-Forn{\ae}ss index seems to involve deeper geometric properties of the boundary.  This was explored in depth by Liu in \cite{Liu17a}.  As in Liu's work, we will show in Proposition \ref{prop:Ck_equivalence} that on domains with $C^3$ boundaries the strong Diederich-Forn{\ae}ss index may be completely characterized by the existence of a family of functions on the boundary of $\Omega$ satisfying a differential inequality.  In Proposition \ref{prop:C2_equivalence}, we will prove an analogous result on domains with $C^2$ boundaries, except that this will require working on an internal neighborhood of the boundary rather than the boundary itself.  In Section \ref{sec:estimates}, we will use this characterization to estimate the strong Diederich-Forn{\ae}ss index on domains satisfying several key hypotheses.

In \cite{Liu17a}, Liu has explicitly computed $DF_s(\Omega)$ when $\Omega$ is the worm domain of Diederich and Forn{\ae}ss \cite{DiFo77a}.  If one could show that $DF_w(\Omega)>DF_s(\Omega)$ for the worm domain, then Berndtsson and Charpentier's work \cite{BeCh00} could be used to improve the known Sobolev regularity for the Bergman Projection on the worm domain.  Since Barrett \cite{Bar92} has already shown that there is an upper bound on the Sobolev regularity for the Bergman Projection on the worm domain, this would be significant.

We note that Diederich and Forn{\ae}ss actually proved that $-(-\rho_\eta)^\eta$ was strictly plurisubharmonic, while we have defined our indices with respect to plurisubharmonic functions.  We will see in our proofs that these definitions are equivalent.

The author would like to thank Peter Pflug for bringing to his attention the possible discrepancy between $DF_{C^2}(\Omega)$ and $DF_{C^\infty}(\Omega)$.

\section{Notation and Preliminary Computations}

For a domain $\Omega\subset\mathbb{R}^n$, let $\delta_\Omega(x)$ denote the distance from $x\in\mathbb{R}^n$ to $b\Omega$.  When a unique point $y\in b\Omega$ exists satisfying $|x-y|=\delta_\Omega(x)$, we write $y=\xi_\Omega(x)$.  The signed distance function is defined by $\tilde\delta_\Omega(x)=\delta_\Omega(x)$ outside $\Omega$ and $\tilde\delta_\Omega(x)=-\delta_\Omega(x)$ on $\Omega$.  For $x\notin b\Omega$, Federer has shown that $\xi_\Omega(x)$ is defined if and only if $\delta_\Omega$ is differentiable at $x$, and these quantities satisfy the relationship $\xi_\Omega(x)=x-\delta_\Omega(x)\nabla\delta_\Omega(x)$ by Theorem 4.8 (3) in \cite{Fed59}.  Equivalently,
\begin{equation}
\label{eq:projection}
  \xi_\Omega(x)=x-\tilde\delta_\Omega(x)\nabla\tilde\delta_\Omega(x),
\end{equation}
for $x\notin b\Omega$ such that $\delta_\Omega$ is differentiable at $x$.

In $\mathbb{C}^n$, we let $\omega(z)=\frac{i}{2}\ddbar|z|^2$ denote the K\"ahler form for the Euclidean metric.  As a notational convenience, if $\Theta$ is a real $(1,1)$-form, we define the action of $\Theta$ on $T^{1,0}\times T^{0,1}$ by $\Theta(X,\bar Y)=\Theta(-i X\wedge\bar Y)$ for any $X,Y\in T^{1,0}$.  Hence, for example, $\omega(X,\bar Y)=\left<X,Y\right>$, where $\left<\cdot,\cdot\right>$ denotes the Hermitian inner product on $T^{1,0}\times T^{1,0}$.

The following lemma is elementary, but we record it separately since we will need the precise values of the constants several times in this section:
\begin{lem}
\label{lem:basic_inequality}
  Let $a,b\in\mathbb{C}$ and let $0<\epsilon<1$.  Then
  \begin{equation}
  \label{eq:basic_inequality}
    |a+b|^2\geq \epsilon|a|^2-\frac{\epsilon}{1-\epsilon}|b|^2.
  \end{equation}
\end{lem}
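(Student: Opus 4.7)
The plan is to prove the inequality by rearranging it into the manifest nonnegativity of a perfect square. First, I would expand
\[
  |a+b|^2 = |a|^2 + 2\re(a\bar b) + |b|^2
\]
and move every term of \eqref{eq:basic_inequality} to one side. Using the elementary identity $1 + \frac{\epsilon}{1-\epsilon} = \frac{1}{1-\epsilon}$ to combine the $|b|^2$ coefficients, the desired inequality becomes equivalent to
\[
  (1-\epsilon)|a|^2 + 2\re(a\bar b) + \frac{1}{1-\epsilon}|b|^2 \geq 0.
\]

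The key observation is that the left-hand side of this reformulation is precisely $\bigl|\sqrt{1-\epsilon}\,a + (1-\epsilon)^{-1/2}\,b\bigr|^2$: the outer coefficients give $(1-\epsilon)|a|^2$ and $(1-\epsilon)^{-1}|b|^2$, while the cross term evaluates to $2\re\bigl(\sqrt{1-\epsilon}\cdot(1-\epsilon)^{-1/2}\,a\bar b\bigr) = 2\re(a\bar b)$ because $\sqrt{1-\epsilon}\cdot(1-\epsilon)^{-1/2}=1$. Nonnegativity of a squared modulus then gives the inequality at once.

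An equivalent route, which I would mention in passing, is to apply Young's inequality $2|a||b|\leq (1-\epsilon)|a|^2+(1-\epsilon)^{-1}|b|^2$ directly to the cross term in the expansion of $|a+b|^2$; the two approaches produce the same computation with the same constants. There is no substantive obstacle here, as the lemma is elementary — the only genuine \emph{content} is the specific pairing of weights $\epsilon$ and $\frac{\epsilon}{1-\epsilon}$, whose form is dictated by the plurisubharmonicity computations for $-(-\rho)^\eta$ where this lemma will be invoked repeatedly in the sequel.
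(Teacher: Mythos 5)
Your proof is correct and is essentially the paper's argument: the square $\bigl|\sqrt{1-\epsilon}\,a+(1-\epsilon)^{-1/2}b\bigr|^2$ you exhibit is exactly the paper's $\frac{1}{1-\epsilon}\left|(1-\epsilon)a+b\right|^2$, expanded and rearranged in the same way. Nothing further is needed.
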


\begin{proof}
  We have
  \[
    0\leq\frac{1}{1-\epsilon}|(1-\epsilon) a+b|^2=(1-\epsilon)|a|^2+2\re(a\bar b)+\frac{1}{1-\epsilon}|b|^2=|a+b|^2-\epsilon|a|^2+\frac{\epsilon}{1-\epsilon}|b|^2.
  \]
\end{proof}

We will frequently need to patch together plurisubharmonic functions without sacrificing regularity, so the following lemma will be helpful.
\begin{lem}
\label{lem:psi_xi_construction}
  For every $\xi>0$, there exists a smooth, convex function $\psi_\xi(x,y)$ on $\mathbb{R}^2$ with the property that $\psi_\xi(x,y)=x$ when $x\geq y+\xi$ and $\psi_\xi(x,y)=y$ when $y\geq x+\xi$.  Such a function necessarily satisfies $\frac{\partial}{\partial x}\psi_\xi(x,y)+\frac{\partial}{\partial y}\psi_\xi(x,y)=1$ on $\mathbb{R}^2$.
\end{lem}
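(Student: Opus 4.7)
The plan is to construct $\psi_\xi$ by smoothing the identity $\max(x, y) = \frac{x + y}{2} + \frac{|x - y|}{2}$, which reduces the two-variable problem to the one-variable problem of smoothing $|t|$. The identity statement then requires a separate, short argument.

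Fix a nonnegative, even mollifier $\eta \in C_c^\infty(\mathbb{R})$ with $\supp\eta \subset [-\xi/2, \xi/2]$ and $\int\eta = 1$, and define $\phi_\xi(t) = \int \eta(s)|t - s|\,ds$. Since $|t|$ is convex and $\eta \geq 0$, $\phi_\xi$ is convex and smooth; a direct computation, using that $\eta$ is even with unit mass, shows $\phi_\xi(t) = |t|$ whenever $|t| \geq \xi/2$. Set
\[
  \psi_\xi(x, y) := \tfrac{x + y}{2} + \tfrac{1}{2}\phi_\xi(x - y).
\]
Smoothness is immediate, and convexity follows from convexity of $\phi_\xi$ together with linearity of $(x, y) \mapsto x - y$. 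The boundary conditions hold because $\phi_\xi(x - y) = \pm(x - y)$ when $\pm(x - y) \geq \xi$, and direct differentiation gives $\partial_x \psi_\xi + \partial_y \psi_\xi = \tfrac{1}{2} + \tfrac{1}{2}\phi_\xi'(x - y) + \tfrac{1}{2} - \tfrac{1}{2}\phi_\xi'(x - y) = 1$.

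For the necessity claim, let $\psi_\xi$ be any smooth convex function satisfying the stated boundary conditions. Change variables via $s = x + y$, $t = x - y$, and set
\[
  W(s, t) := \psi_\xi\!\left(\tfrac{s+t}{2}, \tfrac{s-t}{2}\right) - \tfrac{s}{2}.
\]
Because the change of variables is linear and invertible, $W$ is smooth and convex, and the boundary conditions translate to $W(s, t) = |t|/2$ for $|t| \geq \xi$. For any $t_0 \in (-\xi, \xi)$, write $t_0 = \lambda(-\xi) + (1 - \lambda)\xi$ with $\lambda = \frac{\xi - t_0}{2\xi}$; applying convexity of $W$ along the vertical segment from $(s, -\xi)$ to $(s, \xi)$ yields $W(s, t_0) \leq \lambda W(s, -\xi) + (1 - \lambda) W(s, \xi) = \xi/2$ for all $s$. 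Thus $s \mapsto W(s, t_0)$ is convex on $\mathbb{R}$ and bounded above, hence constant. It follows that $\partial_s W \equiv 0$ in the strip, and the boundary identities give $\partial_s W \equiv 0$ on all of $\mathbb{R}^2$. Translating back produces $\partial_x \psi_\xi + \partial_y \psi_\xi = 1$ on $\mathbb{R}^2$.

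The main obstacle is the necessity statement: smoothness and the prescribed boundary values alone leave ample freedom in the strip $|x - y| < \xi$, so convexity must be used essentially. The crucial classical input is that a convex function on $\mathbb{R}$ that is bounded above must be constant, applied after the linear change of coordinates that converts the lines of slope $1$ foliating the strip into the horizontal lines on which $W$ is forced to be flat.
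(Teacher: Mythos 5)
Your proof is correct, and it diverges from the paper's in an interesting way on the second claim. For the construction the two arguments are close in spirit: the paper mollifies $\max\{x,y\}$ directly in $\mathbb{R}^2$ with a radial kernel and uses the fact that such convolution reproduces linear (harmonic) functions, whereas you reduce to one variable via $\max(x,y)=\tfrac{x+y}{2}+\tfrac{1}{2}|x-y|$ and mollify $|t|$, with the evenness and unit mass of the one-dimensional kernel playing the role of the harmonicity argument; both yield the same conclusion with comparable effort. For the necessity claim the routes are genuinely different. The paper fixes an arbitrary point, uses the supporting tangent plane from convexity, evaluates it along the line $x=y+\xi$ where $\psi_\xi(y+\xi,y)=y+\xi$, and squeezes $\partial_x\psi_\xi+\partial_y\psi_\xi$ between $1$ and $1$ by letting $y\to\pm\infty$. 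You instead pass to coordinates $s=x+y$, $t=x-y$, observe that $W(s,t)=\psi_\xi\bigl(\tfrac{s+t}{2},\tfrac{s-t}{2}\bigr)-\tfrac{s}{2}$ is convex, equals $|t|/2$ for $|t|\geq\xi$, and is bounded above by $\xi/2$ on each horizontal line of the strip by interpolating across $t=\pm\xi$; the classical fact that a convex function on $\mathbb{R}$ bounded above is constant then forces $\partial_s W\equiv 0$, which is exactly the identity. Both arguments exploit convexity together with the prescribed linear growth in the direction $(1,1)$; the paper's is self-contained and purely local-plus-limit, while yours isolates a clean one-variable Liouville-type lemma and avoids the limit computation, at the cost of the coordinate change. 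Each is a complete and valid proof of the lemma.
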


\begin{proof}
  Let $\chi$ be a smooth, positive, radially symmetric function on $\mathbb{R}^2$ supported in $\overline{B(0,1)}$ such that $\int\chi=1$, and set $\chi_\xi(x,y)=\frac{2}{\xi^2}\chi\left(\frac{\sqrt{2}x}{\xi},\frac{\sqrt{2}y}{\xi}\right)$, so that $\int\chi_\xi=1$ and $\chi_\xi$ is supported in $\overline{B\left(0,\frac{\xi}{\sqrt{2}}\right)}$.  Let $\psi_0(x,y)=\max\{x,y\}$, and set $\psi_\xi(x,y)=\psi_0\ast\chi_\xi$.  Since $\psi_0$ is convex, $\psi_\xi$ will also be convex.  When $x\geq y+\xi$, $\psi_0(a,b)=a$ on $B\left((x,y),\frac{\xi}{\sqrt{2}}\right)$, so $(\psi_0\ast\chi_\xi)(x,y)=x$ (convolution against a normalized radially symmetric function will preserve harmonic functions, and linear functions are harmonic).  Similarly, when $y\geq x+\xi$, $\psi_\xi(x,y)=y$.

  Fix $(x_0,y_0)\in\mathbb{R}^2$.  Then convexity of $\psi_\xi$ implies that $f(x,y)=\psi_\xi(x_0,y_0)+\frac{\partial}{\partial x}\psi_\xi(x_0,y_0)(x-x_0)+\frac{\partial}{\partial y}\psi_\xi(x_0,y_0)(y-y_0)$ satisfies $f(x,y)\leq\psi_\xi(x,y)$ for all $(x,y)\in\mathbb{R}^2$.  In particular, $y+\xi-f(y+\xi,y)\geq 0$ for all $y\in\mathbb{R}$.  Now
  \[
    \lim_{y\rightarrow\infty}\frac{y+\xi-f(y+\xi,y)}{y}=1-\frac{\partial}{\partial x}\psi_\xi(x_0,y_0)-\frac{\partial}{\partial y}\psi_\xi(x_0,y_0),
  \]
  so
  \[
    \frac{\partial}{\partial x}\psi_\xi(x_0,y_0)+\frac{\partial}{\partial y}\psi_\xi(x_0,y_0)\leq 1.
  \]
  Similarly,
  \[
    \lim_{y\rightarrow-\infty}\frac{y+\xi-f(y+\xi,y)}{-y}=-1+\frac{\partial}{\partial x}\psi_\xi(x_0,y_0)+\frac{\partial}{\partial y}\psi_\xi(x_0,y_0),
  \]
  so
  \[
    \frac{\partial}{\partial x}\psi_\xi(x_0,y_0)+\frac{\partial}{\partial y}\psi_\xi(x_0,y_0)\geq 1.
  \]
\end{proof}

As a useful consequence, we have the following:
\begin{lem}
\label{lem:psh_extensions}
  Let $\Omega\subset\mathbb{C}^n$ be a bounded pseudoconvex domain, let $U_\eta$ be an open neighborhood of $b\Omega$, and let $\lambda_\eta$ be an upper semi-continuous plurisubharmonic function satisfying $-c(\delta_\Omega)^\eta\leq\lambda_\eta\leq-(\delta_\Omega)^\eta$ on $U_\eta\cap\Omega$ for some $c>1$ and $0<\eta<1$.  Then there exists an upper semi-continuous plurisubharmonic function $\tilde\lambda_\eta$ on $\Omega$ such that $\tilde\lambda_\eta=\lambda_\eta$ on $\tilde U_\eta\cap\Omega$ for some neighborhood $\tilde U_\eta$ of $b\Omega$.  If $\lambda_\eta$ is $C^k$ for some $k\geq 1$ (resp. Lipschitz) on $U_\eta\cap\Omega$, then $\tilde\lambda_\eta$ also $C^k$ (resp. Lipschitz) on $\Omega$.  If $\lambda_\eta$ is strictly plurisubharmonic on $U_\eta\cap\Omega$, then $\tilde\lambda_\eta$ is also strictly plurisubharmonic on $\Omega$.
\end{lem}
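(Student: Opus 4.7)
The plan is to glue $\lambda_\eta$ on an inner neighborhood of $b\Omega$ to a bounded, smooth, strictly plurisubharmonic function $\phi$ defined on all of $\Omega$, using the smoothed maximum $\psi_\xi$ from Lemma~\ref{lem:psi_xi_construction} so that all regularity is preserved. I will take
\[
  \phi(z)=K+\epsilon(|z|^2-R^2),\qquad R=\sup_{z\in\overline\Omega}|z|,
\]
with $K<0$ and $\epsilon>0$ both small; then $\phi$ is smooth, strictly plurisubharmonic with Levi form $\epsilon\omega$, and bounded between $K-\epsilon R^2$ and $K<0$ on $\overline\Omega$.

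Next I will choose $0<d_1<d_2$ with $\{\delta_\Omega<d_2\}\subset U_\eta\cap\Omega$ and a gluing gap $\xi>0$ designed to force two separation inequalities. The lower bound $\lambda_\eta\geq-c\delta_\Omega^\eta$ yields $\lambda_\eta\geq\phi+\xi$ on $\{\delta_\Omega\leq d_1\}$ provided $cd_1^\eta<|K|-\xi$, while the upper bound $\lambda_\eta\leq-\delta_\Omega^\eta$ yields $\phi\geq\lambda_\eta+\xi$ on $\{\delta_\Omega\geq d_2\}$ provided $|K|+\epsilon R^2+\xi<d_2^\eta$. Both are compatible: pick $|K|$ strictly between $cd_1^\eta$ and $d_2^\eta$ (possible since $c>1$ and $d_1$ is at our disposal), then take $\epsilon$ and $\xi$ small. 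Define
\[
  \tilde\lambda_\eta(z)=\begin{cases}\psi_\xi(\lambda_\eta(z),\phi(z))&\text{if }\delta_\Omega(z)<d_2,\\ \phi(z)&\text{if }\delta_\Omega(z)\geq d_2.\end{cases}
\]
The second separation inequality forces $\psi_\xi(\lambda_\eta,\phi)=\phi$ on a neighborhood of $\{\delta_\Omega=d_2\}$, so the two prescriptions match there and $\tilde\lambda_\eta$ inherits from $\lambda_\eta$ (via the smooth $\psi_\xi$) whichever regularity class is assumed: upper semi-continuous, $C^k$, or Lipschitz.

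Plurisubharmonicity of $\tilde\lambda_\eta$ follows because $\psi_\xi$ is convex and non-decreasing in each argument (the monotonicity is inherited from $\psi_0=\max$ through the non-negative convolution kernel in the proof of Lemma~\ref{lem:psi_xi_construction}). Taking $\tilde U_\eta$ to be any open neighborhood of $b\Omega$ with $\tilde U_\eta\cap\Omega\subset\{\delta_\Omega<d_1\}$, the first separation inequality gives $\tilde\lambda_\eta=\lambda_\eta$ on $\tilde U_\eta\cap\Omega$. For the strict case: on $\{\delta_\Omega<d_1\}$ strict plurisubharmonicity is inherited from $\lambda_\eta$, on $\{\delta_\Omega\geq d_2\}$ from $\phi$, and on the transitional gluing region $\{|\lambda_\eta-\phi|<\xi\}$ both $\psi_{\xi,x}$ and $\psi_{\xi,y}$ are strictly positive, so $\ddbar\tilde\lambda_\eta$ dominates a strictly positive combination of $\ddbar\lambda_\eta$ and $\ddbar\phi$, each of which is strictly positive. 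The main obstacle is just the bookkeeping needed to realize both separation inequalities on the prescribed annuli simultaneously; the underlying patching idea (smoothed max against a bounded strictly plurisubharmonic interior function) is standard.
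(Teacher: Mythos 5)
Your construction is correct and is essentially the paper's own proof: the paper likewise glues $\lambda_\eta$ to a bounded strictly plurisubharmonic function of the form $\left(\frac{|z|^2}{3r^2}-\frac{2}{3}\right)\delta_0^\eta$ via the smoothed maximum $\psi_\xi$ of Lemma \ref{lem:psi_xi_construction}, with the two comparability bounds on $\lambda_\eta$ forcing $\psi_\xi$ to select $\lambda_\eta$ near $b\Omega$ and the interior function away from it. Your version only differs in carrying the constants $K,\epsilon,d_1,d_2,\xi$ explicitly and spelling out the separation inequalities, which the paper leaves implicit.
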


\begin{proof}
  Let $\delta_0>0$ satisfy $z\in U_\eta$ whenever $\delta_\Omega(z)\leq\delta_0$.  Let $r=\sup_\Omega|z|$.  Fix $\xi<\frac{1}{3}\delta_0^\eta$ and let $\psi_\xi$ be given by Lemma \ref{lem:psi_xi_construction}.  On $\Omega\backslash U_\eta$, let $\tilde\lambda_\eta=\left(\frac{|z|^2}{3r^2}-\frac{2}{3}\right)\delta_0^\eta$, and on $U$, let $\tilde\lambda_\eta=\psi_\xi\left(\lambda_\eta,\left(\frac{|z|^2}{3r^2}-\frac{2}{3}\right)\delta_0^\eta\right)$.
\end{proof}

Hence, in all that follows we will be able to restrict to a neighborhood of $b\Omega$ when computing the Diederich-Forn{\ae}ss Indices.

\section{The Weak Diederich-Forn{\ae}ss Index}

Our primary goal for this section is to show that the result of Richberg \cite{Ric68} can be strengthened in the context of the Diederich-Forn{\ae}ss index:
\begin{prop}
\label{prop:weak_DF_Index}
  Let $\Omega\subset\mathbb{C}^n$ be a bounded domain and let $\lambda_\eta$ be an upper semi-continuous plurisubharmonic function on $\Omega$ such that $-C_\eta\delta_\Omega^\eta\leq\lambda_\eta\leq -\delta_\Omega^\eta$ on $\Omega$ for some $C_\eta>1$.  Then, for every $0<s<\eta$, there exist constants $C_s>1$, $E_s>0$, and a function $\rho_s\in C^\infty(\Omega)$ that is Lipschitz on $\overline\Omega$ and satisfies
  \[
    -C_s^{1/s}\delta_\Omega\leq \rho_s\leq -\delta_\Omega
  \]
  and
  \[
    i\ddbar(-(-\rho_s)^s)\geq E_s(\delta_\Omega)^s\omega.
  \]
\end{prop}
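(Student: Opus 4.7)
The plan is to first regularize $\lambda_\eta$ to a smooth plurisubharmonic function $\tilde\lambda$ satisfying bounds of the same order $\tilde\lambda\asymp-\delta_\Omega^\eta$, and then set
\[
\rho_s(z):=-(-\tilde\lambda(z))^{1/\eta}\bigl(1+\tau(R^2-|z|^2)\bigr)
\]
for $R>\sup_{\Omega}|z|$ and $\tau>0$ small (to be chosen at the end).  The first factor is what makes $\rho_s$ comparable to $-\delta_\Omega$, while the bounded, positive, plurisuperharmonic multiplier $1+\tau(R^2-|z|^2)$ is there to inject genuine full-rank strict plurisubharmonicity into $-(-\rho_s)^s$; the chain-rule ``gradient-squared'' trick alone only yields positivity in the gradient direction, so an additional multiplicative correction is required to close the estimate.

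For the regularization step, variable-scale mollification of $\lambda_\eta$ with scale comparable to $\delta_\Omega$, followed if necessary by max-type patching via Lemma \ref{lem:psi_xi_construction} to restore plurisubharmonicity, produces smooth psh $\tilde\lambda$ with $\tilde\lambda\asymp-\delta_\Omega^\eta$ and $|\nabla\tilde\lambda|\lesssim\delta_\Omega^{\eta-1}$.  A harmless rescaling then gives $-C'\delta_\Omega^\eta\leq\tilde\lambda\leq-\delta_\Omega^\eta$.  The bounds $-C_s^{1/s}\delta_\Omega\leq\rho_s\leq-\delta_\Omega$ and the Lipschitz property on $\overline\Omega$ follow at once: $|\nabla[(-\tilde\lambda)^{1/\eta}]|\lesssim\delta_\Omega^{1-\eta}\cdot\delta_\Omega^{\eta-1}=1$, and the polynomial factor is smooth and bounded.

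The heart of the proof is the form estimate.  Write $-(-\rho_s)^s=AB$ with $A:=-(-\tilde\lambda)^{s/\eta}<0$ and $B:=(1+\tau(R^2-|z|^2))^s>0$, and expand $i\ddbar(AB)$ by the product rule.  The piece $B\,i\ddbar A\geq 0$ is immediate from plurisubharmonicity of $A$.  The piece $A\,i\ddbar B$ is the source of the $\delta_\Omega^s$ lower bound: since $t\mapsto t^s$ is concave and $1+\tau(R^2-|z|^2)$ is plurisuperharmonic, a direct computation gives $i\ddbar B\leq-c_B\tau\,\omega$, and therefore $A\,i\ddbar B\geq c_B\tau(-A)\,\omega\geq c_B\tau\,\delta_\Omega^s\,\omega$.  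The cross terms $i\partial A\wedge\dbar B+i\partial B\wedge\dbar A$ are controlled with the form version of Lemma \ref{lem:basic_inequality}, applied with the \emph{variable} parameter $\epsilon\sim(-\tilde\lambda)^{-s/\eta}\sim\delta_\Omega^{-s}$: the chain-rule expansion of $i\ddbar A$ contains a gradient-squared term with positive coefficient $\propto 1-s/\eta$ (this is where the gap $s<\eta$ is used), which together with the identity $i\partial A\wedge\dbar A=(s/\eta)^2(-\tilde\lambda)^{2(s/\eta-1)}i\partial\tilde\lambda\wedge\dbar\tilde\lambda$ makes $B\,i\ddbar A-\epsilon\,i\partial A\wedge\dbar A\geq 0$, while the residual $\epsilon^{-1}\,i\partial B\wedge\dbar B\lesssim\tau^2\,\delta_\Omega^s\,\omega$ is dominated by $c_B\tau\,\delta_\Omega^s\,\omega$ for $\tau$ sufficiently small.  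Setting $E_s:=c_B\tau/2$ delivers the required bound.

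The main obstacle is the regularization step: producing a smooth psh $\tilde\lambda$ with simultaneously the two-sided bounds $\asymp-\delta_\Omega^\eta$ and the matching gradient estimate $|\nabla\tilde\lambda|\lesssim\delta_\Omega^{\eta-1}$, starting from a merely upper semi-continuous $\lambda_\eta$.  Variable-scale mollifications do not automatically preserve plurisubharmonicity, so the construction must combine them with the $\psi_\xi$-style patching of Lemma \ref{lem:psi_xi_construction}.  Once $\tilde\lambda$ is in hand, the remainder is the Diederich-Forn{\ae}ss-type computation above, in which the multiplicative $B$-factor and the variable $\epsilon$ together exploit the $s<\eta$ margin.
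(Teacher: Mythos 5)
The second half of your argument is sound as far as it goes: given a smooth plurisubharmonic $\tilde\lambda$ with two-sided bounds $-C'\delta_\Omega^\eta\leq\tilde\lambda\leq-\delta_\Omega^\eta$, the product-rule expansion of $i\ddbar\bigl((-(-\tilde\lambda)^{s/\eta})(1+\tau(R^2-|z|^2))^s\bigr)$, with the cross terms absorbed via Lemma \ref{lem:basic_inequality} at variable $\epsilon\sim(-\tilde\lambda)^{-s/\eta}$ and $\tau$ small, does yield the lower bound $E_s\delta_\Omega^s\omega$; this multiplicative injection of strictness is a legitimate variant of the paper's additive term $\tfrac12E_sb^s\eps^s(|z|^2-r^2)$ in $\sigma_\eps$. (A minor omission: a bounded gradient on $\Omega$ does not by itself give Lipschitz continuity on $\overline\Omega$ for a general bounded domain; one still needs the two-case segment argument, comparing $|z-w|$ with $\delta_\Omega(z)+\delta_\Omega(w)$.)

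The genuine gap is the regularization step, which you flag as ``the main obstacle'' but do not actually carry out, and which as sketched would fail. First, observe that your claimed $\tilde\lambda$ (smooth, psh, comparable to $-\delta_\Omega^\eta$ with the \emph{same} exponent $\eta$, and $|\nabla\tilde\lambda|\lesssim\delta_\Omega^{\eta-1}$) is strictly stronger than the proposition itself: $-(-\tilde\lambda)^{1/\eta}$ would then already be a Lipschitz function comparable to $-\delta_\Omega$ whose $\eta$-th power composition is plurisubharmonic, i.e.\ the conclusion with $s=\eta$, whereas the statement only claims $s<\eta$. Second, the proposed mechanism (variable-scale mollification plus max-type patching via Lemma \ref{lem:psi_xi_construction}) cannot produce it: mollifications $\lambda\ast\chi_\eps$ of a plurisubharmonic function are monotone nondecreasing in $\eps$, so on any overlap the coarser-scale mollification dominates the finer one everywhere, and $\psi_\xi$-patching simply returns the coarser function; it never ``switches'' to the fine scale near $b\Omega$, which is exactly what is needed to keep the upper bound $\leq-\delta_\Omega^\eta$ up to the boundary. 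To make the gluing possible one must rescale and shift the mollifications scale by scale, as in the paper's $\sigma_\eps=A\eps^{s-\eta}\lambda_\eps-B\eps^s+\cdots$, and the crossing inequalities that permit the convex gluing (inequality \eqref{eq:b_characterization}) hold only because $\eta>s$: for $s=\eta$ the relevant ratio tends to $1<C_\eta$, so the mismatch constant $C_\eta>1$ between your upper and lower bounds cannot be beaten at the same exponent. In short, your plan spends the margin $s<\eta$ in the Hessian computation, where it is not the bottleneck, while the step where that margin is indispensable---welding the different mollification scales into a single smooth plurisubharmonic function with a two-sided power-of-$\delta_\Omega$ bound---is assumed rather than proved.
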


\begin{proof}
  Let $\chi\in C^\infty_0(\mathbb{C}^n)$ be a radially symmetric nonnegative function such that $\supp\chi=\overline{B(0,1)}$ and $\int_{\mathbb{C}^n}\chi dV=1$.  For $\eps>0$, set $\chi_\eps(z)=\frac{1}{\eps^{2n}}\chi\left(\frac{z}{\eps}\right)$.  Then $\supp\chi_\eps=\overline{B(0,\eps)}$ and $\int_{\mathbb{C}^n}\chi_\eps dV=1$.

  Let $\Omega_\eps=\{z\in\Omega:\delta_\Omega(z)>\eps\}$.  On $\Omega_\eps$, we may define $\lambda_\eps=\lambda\ast\chi_\eps$.  Since plurisubharmonicity is preserved by convolution with $\chi_\eps$, $\lambda_\eps$ is plurisubharmonic on $\Omega_\eps$.  By the sub-mean-value property, $\lambda(z)\leq\lambda_\eps(z)$ for $z\in\Omega_\eps$.  On the other hand, for $z\in\Omega_\eps$ we have
  \[
    \lambda_\eps(z)\leq\sup_{w\in B(z,\eps)}\lambda(w)\leq-\inf_{w\in B(z,\eps)}(\delta_\Omega(w))^\eta\leq -(\delta_\Omega(z)-\eps)^\eta,
  \]
  so
  \begin{equation}
  \label{eq:lambda_eps_estimate}
    -C_\eta\delta_\Omega^\eta\leq\lambda_\eps\leq-(\delta_\Omega-\eps)^\eta
  \end{equation}
  on $\Omega_\eps$.  Since $\nabla\lambda_\eps=\lambda\ast\nabla\chi_\eps$ and $\norm{\nabla\chi_\eps}_{L^1(\mathbb{C}^n)}\leq O\left(\frac{1}{\eps}\right)$, we have
  \begin{equation}
  \label{eq:lambda_eps_gradient_estimate}
    \abs{\nabla\lambda_\eps}\leq G(\delta_\Omega+\eps)^\eta\eps^{-1}
  \end{equation}
  on $\Omega_\eps$ for some constant $G>0$ independent of $\eps$.

  Fix $a>1$.  Since $\eta>s$, we have
  \[
    \lim_{b\rightarrow\infty}\frac{(b-1)^\eta b^{-s/2}+(a-1)^{\eta}a^{-s/2}}{(ab)^{\eta/2}(a^{-s/2}+b^{-s/2})}=\infty,
  \]
  so we may fix $b>a$ satisfying
  \begin{equation}
  \label{eq:b_characterization}
    \frac{(b-1)^\eta b^{-s/2}+(a-1)^\eta a^{-s/2}}{(ab)^{\eta/2}(a^{-s/2}+b^{-s/2})}>C_\eta.
  \end{equation}
  Set
  \[
    A=\eta^{-1} s^{s/\eta}\frac{b^\eta-a^\eta}{(b-1)^\eta-(a-1)^\eta}\left(\frac{(\eta-s)(a^{-s}-b^{-s})}{b^{\eta-s}-a^{\eta-s}}\right)^{(\eta-s)/\eta}
  \]
  and
  \[
    B=A\frac{(b-1)^\eta b^{-s}-(a-1)^\eta a^{-s}}{a^{-s}-b^{-s}}.
  \]
  The function $y=(x-1)^\eta x^{-s}$ is increasing with respect to $x$ when $x>1$, so $A>0$ and $B>0$.  Let $r=\sup_\Omega|z|$.  By \eqref{eq:b_characterization}, we may choose $E_s$ to satisfy
  \begin{equation}
  \label{eq:E_s_characterization}
     0<E_s<2A\left(\frac{(b-1)^\eta b^{-s/2}+(a-1)^\eta a^{-s/2}}{(ab)^{\eta/2}(a^{-s/2}+b^{-s/2})}-C_\eta\right)(ab)^{\eta/2}b^{-s} r^{-2}.
  \end{equation}
  For $z\in\Omega_\eps$, we set
  \[
    \sigma_\eps(z)=A\eps^{s-\eta}\lambda_\eps(z)-B\eps^s+\frac{1}{2}E_s b^{s}\eps^s(|z|^2-r^2).
  \]

  Let
  \[
    K_\eps=\{z\in\Omega_\eps:\eps a\leq\delta_\Omega(z)\leq\eps b\}.
  \]
  Since $\lambda_\eps$ is plurisubharmonic on $\Omega_\eps$, on $K_\eps$ we have
  \begin{equation}
  \label{eq:sigma_eps_hessian_lower_bound}
    i\ddbar\sigma_\eps\geq E_s(\delta_\Omega)^s\omega.
  \end{equation}
  From \eqref{eq:lambda_eps_gradient_estimate}, we obtain
  \[
    \abs{\nabla\sigma_\eps}\leq A\eps^{s-1}G\left(\frac{\delta_\Omega}{\eps}+1\right)^\eta+E_s b^s\eps^s r,
  \]
  so on $K_\eps$ we have
  \begin{equation}
  \label{eq:sigma_eps_gradient_bound}
    \abs{\nabla\sigma_\eps}\leq Ab^{1-s}G(b+1)^\eta(\delta_\Omega)^{s-1}+E_s b^s a^{-s} r(\delta_\Omega)^s.
  \end{equation}

  Using \eqref{eq:lambda_eps_estimate}, we have
  \[
    \sigma_\eps\leq -A\eps^s\left(\frac{\delta_\Omega}{\eps}-1\right)^\eta-B\eps^s.
  \]
  The function $y=(x^{1/\eta}-1)^\eta$ is concave down, so it is bounded below by the secant line connecting $(a^\eta,(a-1)^\eta)$ to $(b^\eta,(b-1)^\eta)$.  Hence, for $a^\eta\leq x\leq b^\eta$, we have
  \[
    (x^{1/\eta}-1)^\eta\geq\frac{(b-1)^\eta-(a-1)^\eta}{b^\eta-a^\eta}x+\frac{b^\eta(a-1)^\eta-a^\eta(b-1)^\eta}{b^\eta-a^\eta}.
  \]
  Letting $x=(\delta_\Omega(z)/\eps)^\eta$ for $z\in K_\eps$, we find that on $K_\eps$ we have
  \[
    \sigma_\eps\leq-A\eps^s\frac{(b-1)^\eta-(a-1)^\eta}{b^\eta-a^\eta}\left(\frac{\delta_\Omega}{\eps}\right)^\eta-A\eps^s\frac{b^\eta(a-1)^\eta-a^\eta(b-1)^\eta}{b^\eta-a^\eta}-B\eps^s
  \]
  Note that $A\frac{b^\eta(a-1)^\eta-a^\eta(b-1)^\eta}{b^\eta-a^\eta}+B=A\frac{((b-1)^\eta-(a-1)^\eta) (b^{\eta-s}-a^{\eta-s})}{(b^\eta-a^\eta)(a^{-s}-b^{-s})}$, so on $K_\eps$
  \begin{equation}
  \label{eq:sigma_eps_upper_bound}
    \sigma_\eps\leq-A\eps^s\left(\frac{(b-1)^\eta-(a-1)^\eta}{b^\eta-a^\eta}\right)\left(\left(\frac{\delta_\Omega}{\eps}\right)^\eta+\frac{b^{\eta-s}-a^{\eta-s}}{a^{-s}-b^{-s}}\right).
  \end{equation}
  For $z\in b K_\eps$, either $\delta_\Omega(z)=a\eps$ or $\delta_\Omega(z)=b\eps$, so \eqref{eq:sigma_eps_upper_bound} implies
  \begin{equation}
  \label{eq:sigma_eps_critical_upper_bound}
    \sigma_\eps\leq-A(\delta_\Omega)^s\left(\frac{(b-1)^\eta-(a-1)^\eta}{b^s-a^s}\right)\text{ on }b K_\eps.
  \end{equation}
  Using elementary calculus, one can check that the function $y=x^{\eta-s}+\frac{b^{\eta-s}-a^{\eta-s}}{a^{-s}-b^{-s}}x^{-s}$ has a unique minimum (for
  $x>0$) at $x=\left(\frac{s(b^{\eta-s}-a^{\eta-s})}{(\eta-s)(a^{-s}-b^{-s})}\right)^{1/\eta}$.  After some simplification at this critical point, we find that
  \[
    x^{\eta-s}+\frac{b^{\eta-s}-a^{\eta-s}}{a^{-s}-b^{-s}}x^{-s}\geq\eta s^{-s/\eta}\left(\frac{b^{\eta-s}-a^{\eta-s}}{(\eta-s)(a^{-s}-b^{-s})}\right)^{(\eta-s)/\eta}.
  \]
  For $z\in K_\eps$, we may substitute $x=\frac{\delta_\Omega(z)}{\eps}$ in this inequality.  Recalling the definition of $A$, we see that \eqref{eq:sigma_eps_upper_bound} implies
  \begin{equation}
  \label{eq:sigma_eps_normalized_upper_bound}
    \sigma_\eps\leq-(\delta_\Omega)^s\text{ on }K_\eps.
  \end{equation}

  On the other hand, \eqref{eq:lambda_eps_estimate} also gives us
  \[
    \sigma_\eps\geq -A\eps^{s}C_\eta\left(\frac{\delta_\Omega}{\eps}\right)^\eta-B\eps^s-\frac{1}{2}E_s b^{s}\eps^s r^2.
  \]
  When $z\in K_\eps$ satisfies $\delta_\Omega(z)=\sqrt{ab}\eps$, we have
  \[
    \sigma_\eps(z)\geq-(\delta_\Omega(z))^s(ab)^{-s/2}\left( A C_\eta(ab)^{\eta/2}+B+\frac{1}{2}E_s b^{s} r^2\right).
  \]
  If we set
  \[
    \xi=(ab)^{-s/2}\left(A\left(\frac{(b-1)^\eta b^{-s/2}+(a-1)^\eta a^{-s/2}}{(ab)^{\eta/2}(a^{-s/2}+b^{-s/2})}-C_\eta\right)(ab)^{\eta/2}-\frac{1}{2} b^{s} r^2 E_s\right),
  \]
  then \eqref{eq:E_s_characterization} guarantees $\xi>0$.  Hence
  \[
    \sigma_\eps(z)\geq-(\delta_\Omega(z))^s\left((ab)^{-s/2}\left(B+A\frac{(b-1)^\eta b^{-s/2}+(a-1)^\eta a^{-s/2}}{a^{-s/2}+b^{-s/2}}\right)-\xi\right)
  \]
  whenever $z\in K_\eps$ satisfies $\delta_\Omega(z)=\sqrt{ab}\eps$.  Note that
  \[
    B+A \frac{(b-1)^\eta b^{-s/2}+(a-1)^\eta a^{-s/2}}{a^{-s/2}+b^{-s/2}}=A \frac{(b-1)^\eta-(a-1)^\eta}{a^{-s}-b^{-s}}(ab)^{-s/2},
  \]
  so
  \begin{equation}
  \label{eq:sigma_eps_critical_lower_bound}
    \sigma_\eps(z)\geq-(\delta_\Omega(z))^s\left(A \frac{(b-1)^\eta-(a-1)^\eta}{b^s-a^s}-\xi\right)\text{ when }\delta_\Omega(z)=\sqrt{ab}\eps.
  \end{equation}

  Combining \eqref{eq:sigma_eps_critical_upper_bound} and \eqref{eq:sigma_eps_critical_lower_bound}, we see that
  \[
    \xi\leq\inf_{\eps>0}\inf_{\{z\in K_\eps:\delta_\Omega(z)=\sqrt{ab}\eps\}}\frac{\sigma_\eps(z)}{(\delta_\Omega(z))^s}-\sup_{\eps>0}\sup_{z\in b K_\eps}\frac{\sigma_\eps(z)}{(\delta_\Omega(z))^s}.
  \]
  Let $\psi_\xi$ be the function given by Lemma \ref{lem:psi_xi_construction}.  Fix
  \[
    a^{-1}\sup_{\Omega}\delta_\Omega>\eps_0>(ab)^{-1/2}\sup_{\Omega}\delta_\Omega,
  \]
  and for $j\in\mathbb{N}$, let $\eps_j=\eps_0\left(\frac{a}{b}\right)^{j/2}$.  Define $\sigma_s$ by
  \[
    \sigma_s(z)=\eps_j^s\psi_\xi\left(\frac{\sigma_{\eps_j}(z)}{\eps_j^s},\frac{\sigma_{\eps_{j+1}}(z)}{\eps_j^s}\right)\text{ when }a\eps_j\leq\delta_\Omega(z)<\sqrt{ab}\eps_j
  \]
  for all $j\geq 0$.  Note that $a\eps_j=\sqrt{ab}\eps_{j+1}$ and $\sqrt{ab}\eps_j=b\eps_{j+1}$.  When $z\in\Omega$ satisfies $\delta_\Omega(z)=\sqrt{ab}\eps_j$, we have
  \[
    \frac{\sigma_{\eps_j}(z)}{\eps_j^s}=(ab)^{s/2}\frac{\sigma_{\eps_j}(z)}{(\delta_\Omega(z))^s}\geq(ab)^{s/2}\left(\frac{\sigma_{\eps_{j+1}}(z)}{(\delta_\Omega(z))^s}+\xi\right)=\frac{\sigma_{\eps_{j+1}}(z)}{\eps_j^s}+(ab)^{s/2}\xi,
  \]
  and when $\delta_\Omega(z)=\sqrt{ab}\eps_{j+1}$, we have
  \[
    \frac{\sigma_{\eps_{j+1}}(z)}{\eps_j^s}=a^s\frac{\sigma_{\eps_{j+1}}(z)}{(\delta_\Omega(z))^s}\geq a^s\left(\frac{\sigma_{\eps_j}(z)}{(\delta_\Omega(z))^s}+\xi\right)=\frac{\sigma_{\eps_j}(z)}{\eps_j^s}+a^s\xi.
  \]
  Hence, $\sigma_s=\sigma_{\eps_j}$ in a neighborhood of $\{z\in\Omega:\delta_\Omega(z)=\sqrt{ab}\eps_j\}$, so $\sigma_s$ is smooth on $\Omega$.  Convexity of $\psi_\xi$ and \eqref{eq:sigma_eps_hessian_lower_bound} guarantee that
  \[
    i\ddbar\sigma_s\geq E_s(\delta_\Omega)^s\omega
  \]
  on $\Omega$.  We have $\sigma_s\leq -(\delta_\Omega)^s$ on $\Omega$ by \eqref{eq:sigma_eps_normalized_upper_bound}.  Clearly $\frac{\sigma_s}{(\delta_\Omega)^s}$ has a uniform lower bound on $K_\eps$, so there must exist $C_s>0$ such that $\sigma_s\geq -C_s(\delta_\Omega)^s$.  Since the upper bound in \eqref{eq:sigma_eps_gradient_bound} is independent of $\eps$ and $\frac{\partial\psi_\xi}{\partial x}(x,y)+\frac{\partial\psi_\xi}{\partial y}(x,y)=1$, we have
  \[
    \abs{\nabla\sigma_s}\leq Ab^{1-s}G(b+1)^\eta(\delta_\Omega)^{s-1}+E_s b^s a^{-s} r(\delta_\Omega)^s\text{ on }\Omega.
  \]
  If we define $\rho_s=-(-\sigma_s)^{1/s}$, then we immediately obtain all of the necessary properties except for the fact that it is Lipschitz.  To check this, we use $(-\sigma_s)^{(1-s)/s}\leq C_s^{(1-s)/s}(\delta_\Omega)^{1-s}$ on $\Omega$ to show
  \[
    \abs{\nabla\rho_s}\leq \frac{1}{s}(-\sigma_s)^{(1-s)/s}\abs{\nabla\sigma_s}\leq\frac{1}{s}C_s^{(1-s)/s}\left(Ab^{1-s}G(b+1)^\eta+E_s b^s a^{-s} r(\delta_\Omega)\right),
  \]
  so $\rho_s$ has a uniformly bounded gradient.  Since $b\Omega$ is not necessarily rectifiable, this does not necessarily imply that $\rho_s$ is Lipschitz.  Let $z,w\in\Omega$.  If $|z-w|<\delta_\Omega(z)+\delta_\Omega(w)$, then the line segment connecting $z$ to $w$ must lie entirely in $\Omega$, so
  \[
    \abs{\rho_s(z)-\rho_s(w)}\leq |z-w|\sup_\Omega|\nabla\rho_s|.
  \]
  If $|z-w|\geq\delta_\Omega(z)+\delta_\Omega(w)$, then
  \[
    \abs{\rho_s(z)-\rho_s(w)}\leq C_s^{1/s}(\delta_\Omega(z)+\delta_\Omega(w))\leq C_s^{1/s}|z-w|.
  \]
  In either case, we have a uniform bound on the Lipschitz constant, so $\rho_s$ is Lipschitz on $\overline\Omega$.

\end{proof}

\begin{cor}
\label{cor:weak_DF_index}
  Let $\Omega\subset\mathbb{C}^n$ be a bounded pseudoconvex domain.  Then $DF_w(\Omega)=DF_{\Lambda^1}(\Omega)$.
\end{cor}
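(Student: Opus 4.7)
The plan is to establish the two inequalities $DF_{\Lambda^1}(\Omega)\leq DF_w(\Omega)$ and $DF_w(\Omega)\leq DF_{\Lambda^1}(\Omega)$ separately. The first is essentially tautological, and the second is where Proposition \ref{prop:weak_DF_Index} does all the analytic heavy lifting; the corollary itself amounts to matching definitions.

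For the easy direction, I would unwind definitions. Suppose $\eta$ is admissible for $DF_{\Lambda^1}(\Omega)$, realized by $\rho_\eta\in\Lambda^1(\overline\Omega)$ satisfying $-c\delta_\Omega\leq\rho_\eta\leq-\delta_\Omega$ with $-(-\rho_\eta)^\eta$ plurisubharmonic on $\Omega$. Set $\lambda_\eta:=-(-\rho_\eta)^\eta$. Then $\lambda_\eta$ is continuous (hence upper semi-continuous) and plurisubharmonic on $\Omega$, and the bounds on $\rho_\eta$ raise to $-c^\eta\delta_\Omega^\eta\leq\lambda_\eta\leq-\delta_\Omega^\eta$, giving precisely the weak admissibility condition. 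Taking the supremum over $\eta$ yields $DF_{\Lambda^1}(\Omega)\leq DF_w(\Omega)$.

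For the reverse inequality, I would fix any $\eta<DF_w(\Omega)$ and any $s\in(0,\eta)$. By the definition of $DF_w$, there is an upper semi-continuous plurisubharmonic $\lambda_\eta$ on $\Omega$ with $-C_\eta\delta_\Omega^\eta\leq\lambda_\eta\leq-\delta_\Omega^\eta$, which is exactly the hypothesis of Proposition \ref{prop:weak_DF_Index}. Invoking that proposition produces $\rho_s\in C^\infty(\Omega)$ that is Lipschitz on $\overline\Omega$, bounded by $-C_s^{1/s}\delta_\Omega\leq\rho_s\leq-\delta_\Omega$, and satisfies $i\ddbar(-(-\rho_s)^s)\geq E_s\delta_\Omega^s\omega\geq 0$ on $\Omega$. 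Thus $s$ is admissible for $DF_{\Lambda^1}(\Omega)$, so $DF_{\Lambda^1}(\Omega)\geq s$. Letting $s\uparrow\eta$ and then $\eta\uparrow DF_w(\Omega)$ closes the chain and gives equality.

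There is no genuine obstacle specific to the corollary; the only point requiring attention is that the sandwich bounds and Lipschitz regularity must hold on $\overline\Omega$ rather than only on $\Omega$, but this is already built into the statement of Proposition \ref{prop:weak_DF_Index}. All the difficulty — the smoothing of the upper semi-continuous $\lambda_\eta$, the patching of scales via $\psi_\xi$, and the control of $\nabla\rho_s$ near $b\Omega$ — is absorbed into the proposition itself.
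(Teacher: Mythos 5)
Your proposal is correct and matches the paper's argument in substance: both directions reduce to noting that a Lipschitz $\rho_\eta$ trivially yields a weak witness, and that Proposition \ref{prop:weak_DF_Index} converts a weak witness at exponent $\eta$ into a Lipschitz witness at any $s<\eta$, the paper merely phrasing the second step as a proof by contradiction rather than a direct supremum comparison. The only nitpick is that for $\eta<DF_w(\Omega)$ the definition supplies a $\lambda_{\eta'}$ for some admissible $\eta'>\eta$ rather than at $\eta$ itself, but applying the proposition with that $\eta'$ (or noting admissible exponents are downward closed) fixes this immediately, exactly as in the paper.
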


\begin{proof}
  Clearly $DF_w(\Omega)\geq DF_{\Lambda^1}(\Omega)$.  Suppose there exists $DF_w(\Omega)>\eta>DF_{\Lambda^1}(\Omega)$.  By definition, there exists $\lambda_\eta$ satisfying the hypotheses of Proposition \ref{prop:weak_DF_Index}.  Fix $DF_{\Lambda^1}(\Omega)<s<\eta$ and let $\rho_s$ be given by Proposition \ref{prop:weak_DF_Index}.  Then the existence of $\rho_s$ implies $s\leq DF_{\Lambda^1}(\Omega)$, a contradiction.
\end{proof}

\section{The Distance Function on $C^2$ Domains}

Throughout this paper, we use the signed distance function $\tilde\delta_\Omega$ as our canonical defining function.  While there are many advantages to working with an arbitrary defining function as in \cite{Ran81} or \cite{Liu17a,Liu19}, this necessitates working on a $C^3$ domain in order to study the rate at which the hessian changes across the level curves of the defining function.  With the signed distance function, we have Weinstock's formula \eqref{eq:Weinstock}, which gives us the same information using only second derivatives of the defining function.  We will see that this is essential when working on $C^2$ domains.

We will now outline the key results for the signed distance function on $C^2$ domains.  For a domain $\Omega\subset\mathbb{R}^n$ with $C^2$ boundary, there exists a neighborhood $U$ of $b\Omega$ such that for every point $x\in U$ there exists a unique point $\xi_\Omega(x)\in b\Omega$ such that $\delta_\Omega(x)=|x-\xi_\Omega(x)|$ (this follows from Theorem 4.18 in \cite{Fed59}).  We may assume that $U$ is sufficiently small so that $\tilde\delta_\Omega$ is $C^2$ on $U$ by a result of Krantz and Parks \cite{KrPa81}.  In particular, \eqref{eq:projection} holds on $U$.  For $x\in U$, we easily check that $\xi_\Omega((1-t)\xi_\Omega(x)+t x)=\xi_\Omega(x)$ and $\delta_\Omega((1-t)\xi_\Omega(x)+t x)=t\delta_\Omega(x)$ for all $0\leq t\leq 1$, so \eqref{eq:projection} gives us $\nabla\delta_\Omega((1-t)\xi_\Omega(x)+t x)=\nabla\delta_\Omega(x)$ for all $0<t\leq 1$.  In particular, taking a limit as $t\rightarrow 0^+$, we see that
\begin{equation}
\label{eq:gradient_projection}
  \nabla\tilde\delta_\Omega(x)=\nabla\tilde\delta_\Omega(\xi_\Omega(x))
\end{equation}
on $U$.  If we use $\nabla^2\tilde\delta_\Omega(x)$ to denote the Hessian of $\tilde\delta_\Omega$, $I$ to denote the identity matrix, and treat the gradient as a column vector, we may differentiate \eqref{eq:projection} to compute the Jacobian matrix
\begin{equation}
\label{eq:projection_jacobian}
  \nabla\xi_\Omega(x)=I-\tilde\delta_\Omega(x)\nabla^2\tilde\delta_\Omega(x)-(\nabla\tilde\delta_\Omega(x))(\nabla\tilde\delta_\Omega(x))^T
\end{equation}
for all $x\in U$.  Since $\nabla\tilde\delta_\Omega\cdot\nabla\tilde\delta_\Omega=1$ on $U$, we may also differentiate this to obtain
\begin{equation}
\label{eq:hessian_normal_vanishes}
  (\nabla^2\tilde\delta_\Omega(x))(\nabla\tilde\delta_\Omega(x))=0
\end{equation}
for all $x\in U$.  Using \eqref{eq:gradient_projection}, \eqref{eq:projection_jacobian}, and \eqref{eq:hessian_normal_vanishes} to differentiate \eqref{eq:gradient_projection}, we find that
\[
  \nabla^2\tilde\delta_\Omega(x)=\nabla^2\tilde\delta_\Omega(\xi_\Omega(x))\left(I-\tilde\delta_\Omega(x)\nabla^2\tilde\delta_\Omega(x)\right)
\]
on $U$.  On $U$, we may use linear algebra to solve this for $\nabla^2\tilde\delta_\Omega(x)$ to obtain Weinstock's formula \cite{Wei75} (see also \cite{HeMc12} and \cite{HaRa13} for further exposition on this formula):
\[
  \nabla^2\tilde\delta_\Omega(x)=\left(I+\tilde\delta_\Omega(x)\nabla^2\tilde\delta_\Omega(\xi_\Omega(x))\right)^{-1}\nabla^2\tilde\delta_\Omega(\xi_\Omega(x)).
\]
For our purposes, it will suffice to compute the low order approximation
\begin{equation}
\label{eq:Weinstock}
  \abs{\nabla^2\tilde\delta_\Omega(x)-\nabla^2\tilde\delta_\Omega(\xi_\Omega(x))+\tilde\delta_\Omega(x)\left(\nabla^2\tilde\delta_\Omega(\xi_\Omega(x))\right)^2}\leq O((\delta_\Omega(x))^2)
\end{equation}
for all $x\in U$.

Our key results will rely on comparing the signed distance function to a defining function with better potential theoretic properties.  For this purpose, the following lemma will be crucial:
\begin{lem}
\label{lem:defining_function_comparison}
  Let $\Omega\subset\mathbb{R}^n$ be a bounded domain with a $C^k$ boundary, $k\geq 1$, and let $\rho_1$ and $\rho_2$ be $C^k$ defining functions for $\Omega$ on some neighborhood $U$ of $b\Omega$.  Then $\rho_1=h\rho_2$ for some positive function $h\in C^{k-1}(U)\cap C^k(U\backslash b\Omega)$ satisfying
  \begin{equation}
  \label{eq:h_hessian_vanishing}
    \lim_{\eps\rightarrow 0^+}\eps\norm{h}_{C^k(U_\eps)}=0,
  \end{equation}
  where $U_\eps=\set{x\in U:\delta_\Omega(x)>\eps}$ for any $\eps>0$.
\end{lem}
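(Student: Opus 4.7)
My plan is to define $h = \rho_1/\rho_2$ off the boundary, extend it across $b\Omega$ by a Hadamard-type argument, and then exploit the factorization $\rho_i = x_n b_i$ in straightened coordinates to show that the $k$-th derivatives of $h$ blow up strictly more slowly than $1/\delta_\Omega$. Concretely, on $U \setminus b\Omega$ set $h = \rho_1/\rho_2$; since $\rho_1$ and $\rho_2$ have matching signs on each side of $b\Omega$, $h$ is positive and lies in $C^k(U \setminus b\Omega)$. To extend across $b\Omega$, cover $b\Omega$ by finitely many charts $V_j$ with $C^k$ diffeomorphisms $\Phi_j$ straightening $b\Omega \cap V_j$ to $\{x_n = 0\}$ (and $\Omega \cap V_j$ to $\{x_n < 0\}$). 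In these coordinates Hadamard's lemma gives $\rho_i(x', x_n) = x_n b_i(x', x_n)$ with
\[
  b_i(x', x_n) = \int_0^1 \partial_n \rho_i(x', t x_n)\, dt.
\]
Differentiating under the integral up to order $k-1$ shows $b_i \in C^{k-1}$, with $b_i(x', 0) = \partial_n \rho_i(x', 0) \neq 0$ of fixed sign; hence $h = b_1/b_2 \in C^{k-1}$ in each chart and, by agreement off $b\Omega$, globally on $U$.

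Next, for any multi-index $\alpha = (\alpha', \alpha_n)$ with $|\alpha| = k$, differentiating $\rho_i = x_n b_i$ yields the identity
\[
  x_n \partial^\alpha b_i = \partial^\alpha \rho_i - \alpha_n \partial^{\alpha - e_n} b_i
\]
(with the second term understood as $0$ when $\alpha_n = 0$). The right-hand side is continuous on the chart since $\partial^\alpha \rho_i \in C^0$ and $\partial^{\alpha - e_n} b_i \in C^0$ (as $b_i \in C^{k-1}$). The key claim is that it vanishes on $\{x_n = 0\}$: when $\alpha_n = 0$ this follows from $\rho_i|_{x_n=0} \equiv 0$, while when $\alpha_n \geq 1$ the integral formula for $b_i$ gives
\[
  \partial^{\alpha - e_n} b_i(x', 0) = \int_0^1 t^{\alpha_n - 1}\, dt \cdot \partial^\alpha \rho_i(x', 0) = \alpha_n^{-1} \partial^\alpha \rho_i(x', 0),
\]
producing the needed cancellation.

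Given this, $x_n \partial^\alpha b_i$ extends continuously to the chart with zero boundary values, so on a relatively compact piece it admits a modulus of continuity $\omega(|x_n|) \to 0$. Since $|x_n|$ and $\delta_\Omega$ are comparable within each $V_j$, this yields $\eps \sup_{V_j \cap U_\eps} |\partial^\alpha b_i| \to 0$ as $\eps \to 0^+$. Expanding $\partial^\alpha h$ via Leibniz and the quotient rule, the top-order contributions are controlled by $|\partial^\alpha b_1|$ and $|\partial^\alpha b_2|$ weighted by uniformly bounded lower-order quantities (with $b_2$ bounded away from zero), while every term in $\|h\|_{C^{k-1}}$ is uniformly bounded since $h \in C^{k-1}(U)$. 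Combining with the uniform $C^k$ bound for $h$ on the compact remainder $U \setminus \bigcup_j V_j$ (which stays away from $b\Omega$), we obtain $\eps \|h\|_{C^k(U_\eps)} \to 0$.

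The main obstacle is the cancellation in the second step: without the identity $\partial^{\alpha - e_n} b_i(x', 0) = \alpha_n^{-1} \partial^\alpha \rho_i(x', 0)$ coming from the integral representation, the naive quotient rule only yields $O(1/\delta_\Omega)$ for the $k$-th derivatives of $h$, which just fails the required $o(1/\eps)$ estimate. Secondary bookkeeping includes patching across the finite cover and tracking how the $C^k$ diffeomorphisms $\Phi_j$ transfer the local estimates back to $U$, but both are routine once the local cancellation is in hand.
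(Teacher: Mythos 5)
Your proof is correct, but it reaches \eqref{eq:h_hessian_vanishing} by a genuinely different mechanism than the paper's. The paper never straightens the boundary: it quotes the standard fact that $h\in C^{k-1}(U)$, forms the error $E_p=\rho_1-P^{k-1}_{h,p}\rho_2$ whose $k$th Taylor polynomial at $p\in b\Omega$ vanishes, upgrades the pointwise bounds $|D^\ell E_p(x)|\leq o(|x-p|^{k-\ell})$ to bounds that are uniform in $p$ by a continuity-on-the-compact-product argument ($F_{p,D^\ell}(x)=D^\ell E_p(x)/|x-p|^{k-\ell}$ on $b\Omega\times\overline{U_0}$), and then estimates $D^k h=D^k(E_p/\rho_2)$ on non-tangential approach regions $\Gamma_{p,M}$ (the nearest boundary point puts each $x$ near $b\Omega$ in such a region, since $|\rho_2|\gtrsim\delta_\Omega$), obtaining $|x-p|\,|D^kh(x)|\to 0$ uniformly. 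You instead localize, use the Hadamard factorization $\rho_i=x_nb_i$ with $b_i=\int_0^1\partial_n\rho_i(x',tx_n)\,dt$, and extract the exact cancellation $\partial^{\alpha-e_n}b_i(x',0)=\alpha_n^{-1}\partial^\alpha\rho_i(x',0)$, so that $x_n\partial^\alpha b_i$ extends continuously with zero boundary values; the quotient rule for $h=b_1/b_2$ then gives $|D^kh|\leq o(1/\delta_\Omega)$. Your route buys a self-contained re-derivation of $h\in C^{k-1}(U)$ (the paper cites it) and a very explicit identification of where the crucial cancellation comes from; the price is the chart bookkeeping (transferring estimates through the $C^k$ straightening maps via the chain rule, comparability of $|x_n|$ with $\delta_\Omega$), which the paper's coordinate-free Taylor-polynomial argument avoids. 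Two points you leave implicit are worth recording: the modulus $\omega$ for $x_n\partial^\alpha b_i$ must be taken uniform over a compact portion of each chart (a routine compactness argument, playing the role of the paper's $F_{p,D^\ell}$ step), and the passage from $|\partial^\alpha b_i(x)|\leq\omega(\delta_\Omega(x))/\delta_\Omega(x)$ to $\eps\sup_{U_\eps}|\partial^\alpha b_i|\to 0$ needs a one-line split of the supremum (say at $\delta_\Omega=\sqrt{\eps}$, using that $\omega$ is bounded), since points of $U_\eps$ have all distances greater than $\eps$, not distance comparable to $\eps$; both are elementary.
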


\begin{proof}
  That $h$ exists and $h\in C^{k-1}(U)$ is well known (see Lemma 1.1.3 in \cite{ChSh01}, for example).  That $h\in C^k(U\backslash b\Omega)$ is easily confirmed since $h=\frac{\rho_1}{\rho_2}$ on $U\backslash b\Omega$.

  For a function $f$ that is $C^\ell$ in a neighborhood of a point $p\in\mathbb{R}^n$ for some $\ell\geq 0$, we let $P^\ell_{f,p}(x)$ denote the $\ell$th Taylor Polynomial for $f$ centered at $p$.  For $p\in b\Omega$, $\rho_2(p)=0$ implies that
  \[
    E_{p}(x)=\rho_1(x)-P^{k-1}_{h,p}(x)\rho_2(x)
  \]
  is an element of $C^k(U)$ with a vanishing $k$th Taylor polynomial centered at $p$.  As a result, for any $0\leq\ell\leq k$ and differential operator $D^\ell$ of order $\ell$ defined on $U$, $|D^\ell E_p(x)|\leq o(|x-p|^{k-\ell})$ as $x\rightarrow p$.  Let $U_0$ be a neighborhood of $b\Omega$ that is relatively compact in $U$.  If we set $F_{p,D^\ell}(x)=\frac{D^\ell E_p(x)}{|x-p|^{k-\ell}}$ when $x\neq p$ and $F_{p,D^\ell}(p)=0$, then $F_{p,D^\ell}(x)$ is a continuous function for $(p,x)\in b\Omega\times\overline{U_0}$.  Since $b\Omega\times\overline{U_0}$ is compact, $|F_{p,D^\ell}(x)|\leq f_{D^\ell}(|x-p|)$ on $b\Omega\times\overline{U_0}$ for a continuous function $f_{D^\ell}:\mathbb{R}\rightarrow\mathbb{R}$ that vanishes at $0$.  Hence, for every $0\leq\ell\leq k$ and differential operator $D^\ell$ of order $\ell$ there exists a continuous function $f_{D^\ell}$ on $\mathbb{R}$ vanishing at $0$ such that
  \begin{equation}
  \label{eq:derivative_error_estimate}
    |D^\ell E_p(x)|\leq |x-p|^{k-\ell} f_{D^\ell}(|x-p|)
  \end{equation}
  whenever $p\in b\Omega$ and $x\in\overline{U_0}$.

  On $U\backslash b\Omega$
  \[
    h(x)=\frac{\rho_1(x)}{\rho_2(x)}=\frac{P^{k-1}_{h,p}(x)\rho_2(x)+E_p(x)}{\rho_2(x)}=P^{k-1}_{h,p}(x)+\frac{E_p(x)}{\rho_2(x)}.
  \]
  For $p\in b\Omega$ and $M>0$ sufficiently large,
  \[
    \Gamma_{p,M}=\set{x\in U:|x-p|\leq M|\rho_2(x)|}
  \]
  defines a non-tangential approach region for $p$.  On $\Gamma_{p,M}$, \eqref{eq:derivative_error_estimate} implies that for any $k$th order differential operator $D^k$ we have
  \[
    |D^k h(x)|=\abs{D^k\frac{E_p(x)}{\rho_2(x)}}\leq o\left(\frac{1}{|x-p|}\right),
  \]
  with $\lim_{x\rightarrow p}|x-p||D^k h(x)|=0$ uniformly in $x$ and $p$.
\end{proof}

\section{The Strong Diederich-Forn{\ae}ss Index on $C^2$ Domains}

We begin by defining some key hermitian invariants.  Let $\Omega\subset\mathbb{C}^n$ be a domain with $C^2$ boundary, and let $U$ be a neighborhood of $b\Omega$ on which $\tilde\delta_\Omega$ is $C^2$.  We define the real one-form
\[
  \alpha_\Omega=4\sum_{j=1}^n\re\left(\frac{\partial\tilde\delta_\Omega}{\partial\bar z_j}\dbar\left(\frac{\partial\tilde\delta_\Omega}{\partial z_j}\right)\right)
\]
on $U$.  Using (5.85) in \cite{Str10} and the fact that $\abs{\partial\tilde\delta_\Omega}^2=\frac{1}{2}$ on $U$, $\alpha_\Omega|_{b\Omega}$ agrees with D'Angelo's one-form (see 5.9 in \cite{Str10} for further background on this one-form).  While $\alpha_\Omega$ itself is only a hermitian invariant, the cohomology class represented by the restriction of $\alpha_\Omega$ to any complex submanifold in $b\Omega$ is an important biholomorphic invariant \cite{BoSt93}.

For $z\in U$, let $\nu_z\in T^{1,0}_z$ denote the unique vector satisfying $\partial\tilde\delta_\Omega(\nu_z)=1$ and $|\nu_z|=\frac{1}{|\partial\tilde\delta_\Omega|}=\sqrt{2}$, i.e.,
\begin{equation}
\label{eq:nu_characterization}
  \nu_z=4\sum_{j=1}^n\frac{\partial\tilde\delta_\Omega}{\partial\bar z_j}\frac{\partial}{\partial z_j}.
\end{equation}
Then for any $\tau_z\in T^{1,0}_z$, we have
\begin{equation}
\label{eq:alpha_characterization}
  \alpha_\Omega(\tau_z)=\frac{i}{2}\ddbar\tilde\delta_\Omega(\tau_z,\bar\nu_z).
\end{equation}

Let $\pi_{1,0}\alpha_\Omega$ (resp. $\pi_{0,1}\alpha_\Omega$) denote the projection of $\alpha_\Omega$ onto its $(1,0)$ component (resp. $(0,1)$-component).  On $U$, we also define a real, positive semi-definite $(1,1)$-form
\[
  \beta_\Omega=\sum_{j=1}^n i\left(\partial\frac{\partial\tilde\delta}{\partial z_j}\wedge\dbar\frac{\partial\tilde\delta}{\partial \bar z_j}+\partial\frac{\partial\tilde\delta}{\partial\bar z_j}\wedge\dbar\frac{\partial\tilde\delta}{\partial z_j}\right)-2i\pi_{1,0}\alpha_\Omega\wedge\pi_{0,1}\alpha_\Omega.
\]
Once again, $\beta_\Omega$ is only a hermitian invariant.  If we restrict $\beta_\Omega$ to $\beta_\Omega(\tau,\bar\tau)$ at $z\in b\Omega$, where $\tau\in T^{1,0}_z$ lies in the kernel of the Levi-form at $z$, then $\beta_\Omega$ agrees with the definition given in \cite{Har19}.  Section 3 of \cite{Har19} contains further background on the relationship between $\alpha_\Omega$ and $\beta_\Omega$, which will not be relevant for the present paper.  We note that the semi-definite nature of $\beta_\Omega$ can be confirmed by rotating coordinates so that for $z\in U$, $\partial\tilde\delta_\Omega|_z=-\frac{i}{2}dz_n$.  Then $\alpha_\Omega|_z=-2\im\left(\dbar\left(\frac{\partial\tilde\delta_\Omega}{\partial z_n}\right)|_z\right)$. Since \eqref{eq:hessian_normal_vanishes} implies $\alpha_\Omega|_z=-2\im\left(\dbar\left(\frac{\partial\tilde\delta_\Omega}{\partial\bar z_n}\right)|_z\right)$ as well, we have
\[
  \beta_\Omega|_z=\sum_{j=1}^{n-1} i\left(\partial\frac{\partial\tilde\delta}{\partial z_j}\wedge\dbar\frac{\partial\tilde\delta}{\partial \bar z_j}+\partial\frac{\partial\tilde\delta}{\partial\bar z_j}\wedge\dbar\frac{\partial\tilde\delta}{\partial z_j}\right),
\]
which is clearly positive semi-definite.

The utility of $\alpha_\Omega$ and $\beta_\Omega$ can be seen by considering \eqref{eq:Weinstock} in complex coordinates.  If we write $z_j=x_j+iy_j$, \eqref{eq:Weinstock} implies
\begin{multline*}
  i\ddbar\tilde\delta_\Omega|_z
  = i\ddbar\tilde\delta_\Omega|_{\xi_\Omega(z)}
  +\sum_{j=1}^n(-\tilde\delta_\Omega(z))i\left(\partial\frac{\partial\tilde\delta_\Omega}{\partial x_j}\wedge\dbar\frac{\partial\tilde\delta_\Omega}{\partial x_j}+\partial\frac{\partial\tilde\delta_\Omega}{\partial y_j}\wedge\dbar\frac{\partial\tilde\delta_\Omega}{\partial y_j}\right)\bigg|_{\xi_\Omega(z)}\\
  +O((\delta_\Omega(z))^2)
\end{multline*}
for every $z\in U$.  Expressing this in complex coordinates, we find that
\begin{equation}
\label{eq:Weinstock_complex}
  i\ddbar\tilde\delta_\Omega|_z\\
  = i\ddbar\tilde\delta_\Omega|_{\xi_\Omega(z)}+2(-\tilde\delta_\Omega(z))\left(\beta_\Omega+2i\pi_{1,0}\alpha_\Omega\wedge\pi_{0,1}\alpha_\Omega\right)\bigg|_{\xi_\Omega(z)}+O((\delta_\Omega(z))^2)
\end{equation}

Our first lemma provides the key identity relating our plurisubharmonic functions to the appropriate defining function.

\begin{lem}
\label{lem:lambda_determinant}
  Let $\Omega\subset\mathbb{C}^n$ be a bounded domain with $C^2$ boundary.  Suppose that there exists a neighborhood of $b\Omega$ denoted $U$ and a function $\varphi\in C^2(U\backslash b\Omega)\cap C^1(U)$ such that
  \begin{equation}
  \label{eq:varphi_hessian_vanishing}
    \lim_{\eps\rightarrow 0^+}\eps\norm{\varphi}_{C^2(\set{z\in\Omega\cap U:\delta_\Omega(z)>\eps})}=0.
  \end{equation}
  For $0<\eta<1$ and $t\in\mathbb{R}$, on $U\cap\Omega$ we set $\lambda_\eta=-\left(-e^{-t|z|^2-\varphi}\tilde\delta_\Omega\right)^\eta$.  Given $M_\eta\in\mathbb{R}$, there exists a neighborhood $U_\eta$ of $b\Omega$ such that $\tilde\delta_\Omega$ is $C^2$ on $U_\eta$ and $i\ddbar\lambda_\eta(\nu,\bar\nu)-2M_\eta(-\lambda_\eta)>0$ on $U_\eta\cap\Omega$.  Furthermore, for any $\tau\in T^{1,0}(U_\eta)$ such that $\partial\tilde\delta_\Omega(\tau)\equiv 0$, we have
  \begin{multline}
  \label{eq:lambda_determinant}
    \frac{1}{\eta(-\lambda_\eta(z))}\left(i\ddbar\lambda_\eta(\tau,\bar\tau)|_z-M_\eta(-\lambda_\eta(z))|\tau|^2-\frac{\abs{i\ddbar\lambda_\eta(\tau,\bar\nu)|_z}^2}{i\ddbar\lambda_\eta(\nu,\bar\nu)|_z-2M_\eta(-\lambda_\eta(z))}\right)\\
    =(2t-\eta^{-1}M_\eta)\abs{\tau}^2+i\ddbar\varphi(\tau,\bar\tau)|_z+(-\tilde\delta(z))^{-1}i\ddbar\tilde\delta(\tau,\bar\tau)|_{\xi_\Omega(z)}+2\beta(\tau,\bar\tau)|_{\xi_\Omega(z)}\\
    -\frac{\eta}{1-\eta}\abs{\left(t\partial|z|^2(\tau)+\partial\varphi(\tau)\right)|_z-2\alpha(\tau)|_{\xi_\Omega(z)}}^2+\Theta_\eta(\tau,\bar\tau),
  \end{multline}
  where $\Theta_\eta$ is a real $(1,1)$-form on $U_\eta\cap\Omega$ with continuous coefficients that vanish on $b\Omega$.
\end{lem}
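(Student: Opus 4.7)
My plan is to introduce $\tilde\psi:=-t|z|^2-\varphi+\log(-\tilde\delta_\Omega)$, so that $\lambda_\eta=-e^{\eta\tilde\psi}$ on $\Omega\cap U$ (where $-\tilde\delta_\Omega>0$), and to use the identity
\[
  i\ddbar\lambda_\eta=\eta\lambda_\eta\bigl(i\ddbar\tilde\psi+\eta\,i\partial\tilde\psi\wedge\dbar\tilde\psi\bigr).
\]
With $\psi:=-t|z|^2-\varphi$, the $\log(-\tilde\delta_\Omega)$ piece isolates the singular terms $i\ddbar\tilde\delta_\Omega/\tilde\delta_\Omega$ and $-i\partial\tilde\delta_\Omega\wedge\dbar\tilde\delta_\Omega/\tilde\delta_\Omega^2$. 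Evaluating on $(\tau,\bar\tau)$, $(\tau,\bar\nu)$, and $(\nu,\bar\nu)$ is then a matter of repeatedly using $\partial\tilde\delta_\Omega(\tau)=0$, $\partial\tilde\delta_\Omega(\nu)=1$, $|\nu|^2=2$, $\omega(\tau,\bar\nu)=2\partial\tilde\delta_\Omega(\tau)=0$, and the identity $\alpha_\Omega(\tau)=\tfrac{i}{2}\ddbar\tilde\delta_\Omega(\tau,\bar\nu)$ from \eqref{eq:alpha_characterization}. The $(\nu,\bar\nu)$ component is dominated near $b\Omega$ by $\eta(1-\eta)(-\lambda_\eta)/\delta_\Omega^2$, which swamps $2M_\eta(-\lambda_\eta)=O(\delta_\Omega^\eta)$; thus $i\ddbar\lambda_\eta(\nu,\bar\nu)-2M_\eta(-\lambda_\eta)>0$ on a neighborhood $U_\eta$ of $b\Omega$ contained in the $C^2$ neighborhood supplied by Krantz--Parks, establishing the first assertion.

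For the main identity, I would apply Weinstock's formula \eqref{eq:Weinstock_complex} to rewrite $i\ddbar\tilde\delta_\Omega(\tau,\bar\tau)|_z/\tilde\delta_\Omega$ as $(-\tilde\delta_\Omega(z))^{-1}i\ddbar\tilde\delta_\Omega(\tau,\bar\tau)|_{\xi_\Omega(z)}+2\beta_\Omega(\tau,\bar\tau)|_{\xi_\Omega(z)}+4|\alpha_\Omega(\tau)|^2|_{\xi_\Omega(z)}+O(\delta_\Omega|\tau|^2)$. For the Schur complement, I would introduce $B$ and $C$ via $i\ddbar\lambda_\eta(\tau,\bar\nu)=\eta\lambda_\eta B$ and $i\ddbar\lambda_\eta(\nu,\bar\nu)=\eta\lambda_\eta C$, and multiply the numerator and denominator of $|i\ddbar\lambda_\eta(\tau,\bar\nu)|^2/[i\ddbar\lambda_\eta(\nu,\bar\nu)-2M_\eta(-\lambda_\eta)]$ by $\tilde\delta_\Omega^2$. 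Direct computation shows $\tilde\delta_\Omega B\to 2\alpha_\Omega(\tau)|_{\xi_\Omega(z)}+\eta\partial\psi(\tau)|_z$ and $\tilde\delta_\Omega^2(\eta C+2M_\eta)\to-\eta(1-\eta)$ as $\delta_\Omega\to 0$, so the Schur term contributes $-\tfrac{1}{1-\eta}|2\alpha_\Omega(\tau)+\eta\partial\psi(\tau)|^2$ to the left-hand side of \eqref{eq:lambda_determinant}, modulo terms that vanish at $b\Omega$.

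The concluding algebraic step is to combine the $4|\alpha_\Omega(\tau)|^2$ from Weinstock, the $-\eta|\partial\psi(\tau)|^2$ from the direct $(\tau,\bar\tau)$ computation, and the Schur limit $-\tfrac{1}{1-\eta}|2\alpha_\Omega(\tau)+\eta\partial\psi(\tau)|^2$; clearing the $(1-\eta)$ denominator and expanding the square, their sum collapses to $-\tfrac{\eta}{1-\eta}|2\alpha_\Omega(\tau)+\partial\psi(\tau)|^2$, which after substituting $\partial\psi=-t\partial|z|^2-\partial\varphi$ is the term appearing in \eqref{eq:lambda_determinant}. All residual quantities---the $O(\delta_\Omega|\tau|^2)$ Weinstock error, the discrepancies $\tilde\delta_\Omega B-(2\alpha_\Omega(\tau)+\eta\partial\psi(\tau))$ and $\tilde\delta_\Omega^2(\eta C+2M_\eta)+\eta(1-\eta)$, and every product of $\tilde\delta_\Omega$ with a second derivative of $\varphi$---are continuous on $U_\eta\cap\Omega$ and, by \eqref{eq:varphi_hessian_vanishing} together with continuity of $\alpha_\Omega$, $\beta_\Omega$, and $i\ddbar\tilde\delta_\Omega$ on $U_\eta$, extend continuously to $U_\eta$ with zero boundary value; gathering them produces the $(1,1)$-form $\Theta_\eta$. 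I expect the main obstacle to be the Schur bookkeeping and the final algebraic collapse, made delicate because $\varphi$ is only $C^1$ on $U$: each appearance of a second derivative of $\varphi$ in the Schur expansion must be verified to come paired with at least one factor of $\tilde\delta_\Omega$, so that \eqref{eq:varphi_hessian_vanishing} is precisely what is needed to absorb it into $\Theta_\eta$.
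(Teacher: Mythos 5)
Your proposal is correct and follows essentially the same route as the paper's proof: logarithmic differentiation of $\lambda_\eta$, evaluation on $(\nu,\bar\nu)$, $(\tau,\bar\nu)$, $(\tau,\bar\tau)$ using \eqref{eq:alpha_characterization} and Weinstock's formula \eqref{eq:Weinstock_complex}, passage to the leading behavior of the Schur complement, and the same final algebraic collapse of $4|\alpha|^2-\eta|\partial\psi|^2-\tfrac{1}{1-\eta}|2\alpha+\eta\partial\psi|^2$ into $-\tfrac{\eta}{1-\eta}|2\alpha+\partial\psi|^2$, with the residuals absorbed into $\Theta_\eta$. The only difference is organizational (you factor out $\eta\lambda_\eta$ and work with the normalized quantities $B$, $C$, where the paper carries $\partial\lambda_\eta$ explicitly), so there is nothing substantive to add.
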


\begin{proof}
  We assume that we have already restricted to a neighborhood $U_\eta$ of $b\Omega$, to be determined later, on which $\tilde\delta_\Omega$ is $C^2$.  Since $\log(-\lambda_\eta)=\eta\left(-t|z|^2-\varphi+\log(-\tilde\delta_\Omega)\right)$, we have
  \[
    \partial\lambda_\eta=\eta(-\lambda_\eta)\left(t\partial|z|^2+\partial\varphi+(-\tilde\delta_\Omega)^{-1}\partial\tilde\delta_\Omega\right),
  \]
  so
  \begin{equation}
  \label{eq:lambda_tau}
    \partial\lambda_\eta(\tau)=\eta(-\lambda_\eta)\left(t\partial|z|^2(\tau)+\partial\varphi(\tau)\right)
  \end{equation}
  and
  \begin{equation}
  \label{eq:lambda_nu}
    \abs{\partial\lambda_\eta(\nu)-\eta(-\lambda_\eta)(-\tilde\delta_\Omega)^{-1}}\leq O((-\lambda_\eta)).
  \end{equation}
  Furthermore, we have
  \begin{multline*}
    i\ddbar\lambda_\eta=-(-\lambda_\eta)^{-1}i\partial\lambda_\eta\wedge\dbar\lambda_\eta\\
    +\eta(-\lambda_\eta)\left(2t\omega+i\ddbar\varphi+(-\tilde\delta_\Omega)^{-1}i\ddbar\tilde\delta_\Omega+(-\tilde\delta_\Omega)^{-2}i\partial\tilde\delta_\Omega\wedge\dbar\tilde\delta_\Omega\right).
  \end{multline*}
  We first confirm that
  \[
    \abs{i\ddbar\lambda_\eta(\nu,\bar\nu)+(-\lambda_\eta)^{-1}\abs{\partial\lambda_\eta(\nu)}^2-\eta(-\lambda_\eta)(-\tilde\delta_\Omega)^{-2}}\leq O((-\lambda_\eta)(-\tilde\delta_\Omega)^{-1}).
  \]
  Using \eqref{eq:lambda_nu}, we have
  \begin{equation}
  \label{eq:lambda_nu_nu}
    \abs{i\ddbar\lambda_\eta(\nu,\bar\nu)-\eta(1-\eta)(-\lambda_\eta)(-\tilde\delta_\Omega)^{-2}}\leq O((-\lambda_\eta)(-\tilde\delta_\Omega)^{-1}).
  \end{equation}
  Since $0<\eta<1$ and $2M_\eta(-\lambda_\eta)$ is bounded by the error term, we may choose $U_\eta$ sufficiently small so that $i\ddbar\lambda_\eta(\nu,\bar\nu)-2M_\eta(-\lambda_\eta)>0$ on $U_\eta\cap\Omega$.

  Using \eqref{eq:alpha_characterization}, we have
  \[
    i\ddbar\lambda_\eta(\tau,\bar\nu)=-(-\lambda_\eta)^{-1}\partial\lambda_\eta(\tau)\overline{\partial\lambda_\eta(\nu)}+\eta(-\lambda_\eta)\left(i\ddbar\varphi(\tau,\bar\nu)+2(-\tilde\delta_\Omega)^{-1}\alpha_\Omega(\tau)\right).
  \]
  From \eqref{eq:varphi_hessian_vanishing}, we know that $\lim_{z\rightarrow b\Omega}(-\tilde\delta_\Omega(z))i\ddbar\varphi(\tau,\bar\nu)|_z=0$ with uniform convergence, which we will denote $\abs{i\ddbar\varphi(\tau,\bar\nu)}\leq o((-\tilde\delta_\Omega)^{-1}|\tau|)$.  Using \eqref{eq:lambda_nu}, we have
  \begin{equation}
  \label{eq:lambda_tau_nu}
    \abs{i\ddbar\lambda_\eta(\tau,\bar\nu)+\eta(-\tilde\delta_\Omega)^{-1}\partial\lambda_\eta(\tau)-2\eta(-\lambda_\eta)(-\tilde\delta_\Omega)^{-1}\alpha_\Omega(\tau)}\leq o\left((-\lambda_\eta)(-\tilde\delta_\Omega)^{-1}|\tau|\right).
  \end{equation}
  Combining \eqref{eq:lambda_tau_nu} with \eqref{eq:lambda_nu_nu}, we obtain
  \begin{equation}
  \label{eq:lambda_fraction}
    \abs{\frac{\abs{i\ddbar\lambda_\eta(\tau,\bar\nu)}^2}{i\ddbar\lambda_\eta(\nu,\bar\nu)-2M_\eta(-\lambda_\eta)}-\frac{\eta\abs{\partial\lambda_\eta(\tau)-2(-\lambda_\eta)\alpha_\Omega(\tau)}^2}{(1-\eta)(-\lambda_\eta)}}\leq o\left((-\lambda_\eta)|\tau|^2\right).
  \end{equation}

  Finally, we have
  \begin{multline*}
    i\ddbar\lambda_\eta(\tau,\bar\tau)=-(-\lambda_\eta)^{-1}\abs{\partial\lambda_\eta(\tau)}^2\\
    +\eta(-\lambda_\eta)\left(2t\abs{\tau}^2+i\ddbar\varphi(\tau,\bar\tau)+(-\tilde\delta_\Omega)^{-1}i\ddbar\tilde\delta_\Omega(\tau,\bar\tau)\right).
  \end{multline*}
  From \eqref{eq:Weinstock_complex}, we obtain for any $z\in U_\eta\cap\Omega$,
  \begin{multline}
  \label{eq:lambda_tau_tau}
    \Big|i\ddbar\lambda_\eta(\tau,\bar\tau)|_z+(-\lambda_\eta(z))^{-1}\abs{\partial\lambda_\eta(\tau)|_z}^2-\eta(-\lambda_\eta(z))\left(2t\abs{\tau}^2+i\ddbar\varphi(\tau,\bar\tau)|_z\right)\\
    -\eta(-\lambda_\eta(z))\left((-\tilde\delta_\Omega(z))^{-1}i\ddbar\tilde\delta_\Omega(\tau,\bar\tau)|_{\xi_\Omega(z)}+2(\beta_\Omega(\tau,\bar\tau)+2\abs{\alpha_\Omega(\tau)}^2)|_{\xi_\Omega(z)}\right)\Big|\\
    \leq O\left((-\lambda_\eta(z))(-\tilde\delta_\Omega(z))|\tau|^2\right).
  \end{multline}
  To combine \eqref{eq:lambda_tau_tau} with \eqref{eq:lambda_fraction}, we first compute
  \begin{multline*}
    \frac{\eta\abs{\partial\lambda_\eta(\tau)-2(-\lambda_\eta)\alpha_\Omega(\tau)}^2}{(1-\eta)(-\lambda_\eta)}+(-\lambda_\eta)^{-1}\abs{\partial\lambda_\eta(\tau)}^2-4\eta(-\lambda_\eta)\abs{\alpha_\Omega(\tau)}^2\\
    =\frac{\abs{\partial\lambda_\eta(\tau)-2\eta(-\lambda_\eta)\alpha_\Omega(\tau)}^2}{(1-\eta)(-\lambda_\eta)}.
  \end{multline*}
  Since \eqref{eq:Weinstock_complex} also implies $\abs{\alpha_\Omega(\tau)|_z-\alpha_\Omega(\tau)_{\xi_\Omega(z)}}\leq O((-\tilde\delta_\Omega(z))|\tau|)$, we may substitute this in \eqref{eq:lambda_fraction} and combine this with \eqref{eq:lambda_tau_tau} to obtain
  \begin{multline*}
    \Bigg|i\ddbar\lambda_\eta(\tau,\bar\tau)|_z-\frac{\abs{i\ddbar\lambda_\eta(\tau,\bar\nu)}^2}{i\ddbar\lambda_\eta(\nu,\bar\nu)-2M_\eta(-\lambda_\eta)}|_z\\
    +\frac{\abs{\partial\lambda_\eta(\tau)|_z-2\eta(-\lambda_\eta(z))\alpha_\Omega(\tau)|_{\xi_\Omega(z)}}^2}{(1-\eta)(-\lambda_\eta(z))}\\
    -\eta(-\lambda_\eta(z))\left(2t\abs{\tau}^2+i\ddbar\varphi(\tau,\bar\tau)|_z+(-\tilde\delta_\Omega(z))^{-1}i\ddbar\tilde\delta_\Omega(\tau,\bar\tau)|_{\xi_\Omega(z)}\right)\\
    -\eta(-\lambda_\eta(z))2\beta_\Omega(\tau,\bar\tau)|_{\xi_\Omega(z)}\Bigg|\leq o\left((-\lambda_\eta(z))|\tau|^2\right).
  \end{multline*}
  If we substitute \eqref{eq:lambda_tau} in this, \eqref{eq:lambda_determinant} will follow.

\end{proof}

Now we are ready to characterize $DF_{C^2}(\Omega)$ when $\Omega$ only has a $C^2$ boundary.
\begin{prop}
\label{prop:C2_equivalence}
  Let $\Omega\subset\mathbb{C}^n$ be a bounded pseudoconvex domain with $C^2$ boundary, and let $r=\sup_\Omega|z|$.
  \begin{enumerate}
    \item Suppose that for some $0<\eta<1$ and $M_\eta\in\mathbb{R}$ there exists a $C^2$ defining function $\rho_\eta$ for $\Omega$ with the property that $\lambda_\eta=-(-\rho_\eta)^\eta$ satisfies $i\ddbar\lambda_\eta\geq M_\eta(-\lambda_\eta)\omega$ on $U_\eta\cap\Omega$ for some neighborhood $U_\eta$ of $b\Omega$.  Then for every $0<s<\eta$ and $N_s<\frac{1}{2r^2}\left(\frac{1}{s}-\frac{1}{\eta}\right)+\eta^{-1}M_\eta$ there exists a neighborhood $U_s\subset U_\eta$ of $b\Omega$ and a function $\varphi_s\in C^1(U_s)\cap C^2(U_s\backslash b\Omega)$ satisfying \eqref{eq:varphi_hessian_vanishing} such that for every $z\in U_s\cap\Omega$ and $\tau\in T^{1,0}_z$ satisfying $\partial\tilde\delta_\Omega(\tau)=0$, we have
        \begin{multline}
        \label{eq:varphi_hessian_estimate_conclusion}
          N_s\abs{\tau}^2+\frac{s}{1-s}\abs{\partial\varphi_s(\tau)|_z-2\alpha(\tau)|_{\xi_\Omega(z)}}^2\leq\\ i\ddbar\varphi_s(\tau,\bar\tau)|_z+(-\tilde\delta(z))^{-1}i\ddbar\tilde\delta(\tau,\bar\tau)|_{\xi_\Omega(z)}+2\beta(\tau,\bar\tau)|_{\xi_\Omega(z)}.
        \end{multline}

    \item Suppose that for some $0<s<1$ and $N_s\in\mathbb{R}$ there exists a neighborhood $U_s$ of $b\Omega$ and a function $\varphi_s\in C^1(U_s)\cap C^2(U_s\backslash b\Omega)$ satisfying \eqref{eq:varphi_hessian_vanishing} such that for every $z\in U_s\cap\Omega$ and $\tau\in T^{1,0}_z$ satisfying $\partial\tilde\delta_\Omega(\tau)=0$, we have
        \begin{multline}
        \label{eq:varphi_hessian_estimate_hypothesis}
          N_s\abs{\tau}^2+\frac{s}{1-s}\abs{\partial\varphi_s(\tau)|_z-2\alpha(\tau)|_{\xi_\Omega(z)}}^2\leq \\
          i\ddbar\varphi_s(\tau,\bar\tau)|_z
          +(-\tilde\delta(z))^{-1}i\ddbar\tilde\delta(\tau,\bar\tau)|_{\xi_\Omega(z)}+2\beta(\tau,\bar\tau)|_{\xi_\Omega(z)}.
        \end{multline}
        Then for every $0<\eta<s$ and $M_\eta<\frac{1}{2r^2}\left(1-\frac{\eta}{s}\right)+\eta N_s$ there exists a $C^2$ defining function $\rho_\eta$ for $\Omega$ such that $\lambda_\eta=-(-\rho_\eta)^\eta$ satisfies $i\ddbar\lambda_\eta\geq M_\eta(-\lambda_\eta)\omega$ on $U_\eta\cap\Omega$ for some neighborhood $U_\eta$ of $b\Omega$.
  \end{enumerate}
\end{prop}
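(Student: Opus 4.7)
The plan is to use Lemma~\ref{lem:lambda_determinant} as a two-way dictionary between the hessian inequality on $\lambda_\eta$ and the inequality satisfied by $\varphi_s$, with the two parts of the proposition following parallel routes. A first simplification is that in~\eqref{eq:lambda_determinant} the parameters $t$ and $\varphi$ enter only through the total exponent $\varphi_{\mathrm{tot}}=t|z|^2+\varphi$ (as an elementary rewriting confirms), so the genuine freedom is a shift $c|z|^2$ between the target function of one inequality and the $\varphi_{\mathrm{tot}}$ of the other. The additive term $\frac{1}{2r^2}(1/s-1/\eta)$ in the threshold will come out of optimizing this $c$ against the quadratic error produced by Lemma~\ref{lem:basic_inequality} when one converts the coefficient $\frac{\eta}{1-\eta}$ of the squared-norm term to $\frac{s}{1-s}$ (or vice versa).

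For Part~(1), I would apply Lemma~\ref{lem:defining_function_comparison} with $\rho_1=\rho_\eta$ and $\rho_2=\tilde\delta_\Omega$ to obtain a positive $h\in C^1(U)\cap C^2(U\setminus b\Omega)$ with $\rho_\eta=h\tilde\delta_\Omega$ and $h$ satisfying~\eqref{eq:h_hessian_vanishing}; setting $\varphi_{\mathrm{tot}}=-\log h$ produces a function inheriting the scaling bound~\eqref{eq:varphi_hessian_vanishing}. The hessian hypothesis on $\lambda_\eta$ is equivalent, via a Schur-complement argument in the orthogonal decomposition $T^{1,0}=\mathbb{C}\nu\oplus\nu^\perp$, to non-negativity of the bracket on the left-hand side of~\eqref{eq:lambda_determinant} for all $\tau$ annihilated by $\partial\tilde\delta_\Omega$. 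Defining $\varphi_s=\varphi_{\mathrm{tot}}+c|z|^2$ and substituting, then applying Lemma~\ref{lem:basic_inequality} with the precise choice $\epsilon=s(1-\eta)/[\eta(1-s)]$ (which lies in $(0,1)$ exactly because $s<\eta$), converts $\tfrac{\eta}{1-\eta}\abs{\partial\varphi_s(\tau)-c\partial|z|^2(\tau)-2\alpha(\tau)}^2$ into $\tfrac{s}{1-s}\abs{\partial\varphi_s(\tau)-2\alpha(\tau)}^2$ at the cost of an error bounded by $\tfrac{2\eta s r^2}{\eta-s}c^2|\tau|^2$ (using $\abs{\partial|z|^2(\tau)}^2\leq 2r^2|\tau|^2$). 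Maximizing $2c-\tfrac{2\eta s r^2}{\eta-s}c^2$ over $c$ attains the threshold $\tfrac{1}{2r^2}(1/s-1/\eta)$, and~\eqref{eq:varphi_hessian_estimate_conclusion} follows after absorbing $\Theta_\eta$ by shrinking $U_s$.

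Part~(2) is the mirror of Part~(1): given $\varphi_s$, I would set $\rho_\eta=e^{-\varphi_s-c|z|^2}\tilde\delta_\Omega$ with $c=(s-\eta)/(2\eta s r^2)$, the optimizer of the analogous quadratic-in-$c$. The $C^2$ regularity of $\rho_\eta$ on $U_s$ is exactly what~\eqref{eq:varphi_hessian_vanishing} buys: the only possibly singular contribution to $\nabla^2\rho_\eta$ is $\tilde\delta_\Omega\nabla^2 e^{-\varphi_s-c|z|^2}$, whose norm vanishes uniformly as one approaches $b\Omega$. Applying Lemma~\ref{lem:lambda_determinant} with $t=0$ and $\varphi=\varphi_s+c|z|^2$, the lemma itself furnishes $i\ddbar\lambda_\eta(\nu,\bar\nu)>2M_\eta(-\lambda_\eta)$ for free, while hypothesis~\eqref{eq:varphi_hessian_estimate_hypothesis} together with Lemma~\ref{lem:basic_inequality} with the reciprocal choice $\epsilon=\eta(1-s)/[s(1-\eta)]$ (valid because $\eta<s$) drives the right-hand side of~\eqref{eq:lambda_determinant} to be non-negative; inverting the Schur complement then yields the full hessian bound $i\ddbar\lambda_\eta\geq M_\eta(-\lambda_\eta)\omega$. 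I expect the main obstacle to be bookkeeping the appearances of the normalization $\abs{\partial|z|^2(\tau)}^2\leq 2r^2|\tau|^2$ (descended from $\abs{\partial\tilde\delta_\Omega}^2=1/2$), since this uniform bound is precisely what produces the factor $\frac{1}{2r^2}$ in the threshold; aside from this, absorbing the error $\Theta_\eta$ by shrinking the neighborhood is routine because its coefficients vanish continuously on $b\Omega$.
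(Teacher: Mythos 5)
Your proposal is correct and follows essentially the same route as the paper: factor $\rho_\eta=h\tilde\delta_\Omega$ via Lemma \ref{lem:defining_function_comparison}, use Lemma \ref{lem:lambda_determinant} together with the Schur complement in the $\nu\oplus\nu^{\perp}$ splitting, and trade $\tfrac{\eta}{1-\eta}$ for $\tfrac{s}{1-s}$ via Lemma \ref{lem:basic_inequality} with exactly the $\epsilon$ and the optimal shift $c|z|^2$ the paper uses (the paper writes it as the choice $t=\mp\tfrac{1}{2r^2}\left(\tfrac1s-\tfrac1\eta\right)$ rather than optimizing, but the constants agree), then absorb $\Theta_\eta$ by shrinking the neighborhood. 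Your observation that only $t|z|^2+\varphi$ matters and your accounting of the bound $\abs{\partial|z|^2(\tau)}^2\leq 2r^2|\tau|^2$ are consistent with the paper's normalization, so no gap.
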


\begin{proof}
To prove $(1)$, we may assume that $U_\eta$ is at least as small as the $U_\eta$ given by Lemma \ref{lem:lambda_determinant}.  Using Lemma \ref{lem:defining_function_comparison}, we write $\rho_\eta=h_\eta\tilde\delta_\Omega$ for $h_\eta\in C^1(U)\cap C^2(U\backslash b\Omega)$ satisfying \eqref{eq:h_hessian_vanishing}.  We set $t=-\frac{1}{2r^2}\left(\frac{1}{s}-\frac{1}{\eta}\right)<0$.  Since $h_\eta>0$ on $U_\eta$, we may define $\varphi_s(z)=-\log h_\eta(z)-t|z|^2$ on $U_\eta$.  \eqref{eq:varphi_hessian_vanishing} will follow from \eqref{eq:h_hessian_vanishing}.

For any $z\in U_\eta\cap\Omega$ and $\tau\in T^{1,0}_z$ satisfying $\partial\tilde\delta_\Omega(\tau)=0$, we set
\[
  L=\tau-\frac{i\ddbar\lambda_\eta(\tau,\nu)}{i\ddbar\lambda_\eta(\nu,\nu)-2M_\eta(-\lambda_\eta)}\nu.
\]
Then we have
\begin{multline*}
  0\leq i\ddbar\lambda_\eta(L,\bar L)-M_\eta(-\lambda_\eta)|L|^2\\
  =i\ddbar\lambda_\eta(\tau,\bar\tau)-M_\eta(-\lambda_\eta)|\tau|^2-\frac{\abs{i\ddbar\lambda_\eta(\tau,\bar\nu)}^2}{i\ddbar\lambda_\eta(\nu,\bar\nu)-2M_\eta(-\lambda_\eta)}.
\end{multline*}
By \eqref{eq:lambda_determinant}, we have
\begin{multline*}
  0\leq (2t-\eta^{-1}M_\eta)\abs{\tau}^2+i\ddbar\varphi_s(\tau,\bar\tau)|_z+(-\tilde\delta_\Omega(z))^{-1}i\ddbar\tilde\delta_\Omega(\tau,\bar\tau)|_{\xi_\Omega(z)}\\
  +2\beta_\Omega(\tau,\bar\tau)|_{\xi_\Omega(z)}
  -\frac{\eta}{1-\eta}\abs{\left(t\partial|z|^2(\tau)+\partial\varphi_s(\tau)\right)|_z-2\alpha_\Omega(\tau)|_{\xi_\Omega(z)}}^2+\Theta_\eta(\tau,\bar\tau),
\end{multline*}

Note that $\abs{t\partial|z|^2(\tau)}\leq-\sqrt{2}r t|\tau|$.  If we set $\epsilon=\frac{(1-\eta)s}{\eta(1-s)}$, then $\frac{\epsilon}{1-\epsilon}=\frac{(1-\eta)s}{\eta-s}$, so we may use \eqref{eq:basic_inequality} to obtain
\begin{multline*}
  \abs{\left(t\partial|z|^2(\tau)+\partial\varphi_s(\tau)\right)|_z-2\alpha_\Omega(\tau)|_{\xi_\Omega(z)}}^2\\
  \geq \frac{(1-\eta)s}{\eta(1-s)}\abs{\partial\varphi_s(\tau)|_z-2\alpha_\Omega(\tau)|_{\xi_\Omega(z)}}^2-\frac{(1-\eta)s}{\eta-s}2r^2 t^2|\tau|^2.
\end{multline*}
so we obtain
\begin{multline*}
  0\leq -\left(\frac{1}{2r^2}\left(\frac{1}{s}-\frac{1}{\eta}\right)+\eta^{-1}M_\eta\right)\abs{\tau}^2
  +i\ddbar\varphi_s(\tau,\bar\tau)|_z\\
  +(-\tilde\delta_\Omega(z))^{-1}i\ddbar\tilde\delta_\Omega(\tau,\bar\tau)|_{\xi_\Omega(z)}
  +2\beta_\Omega(\tau,\bar\tau)|_{\xi_\Omega(z)}\\
  -\frac{s}{1-s}\abs{\partial\varphi_s(\tau)|_z-2\alpha_\Omega(\tau)|_{\xi_\Omega(z)}}^2+\Theta_\eta(\tau,\bar\tau).
\end{multline*}
If we choose $U_s$ sufficiently small so that
\[
  \Theta_\eta(\tau,\bar\tau)\leq\left(\frac{1}{2r^2}\left(\frac{1}{s}-\frac{1}{\eta}\right)+\eta^{-1}M_\eta-N_s\right)|\tau|^2
\]
on $U_s$, then we will have \eqref{eq:varphi_hessian_estimate_conclusion}.

To prove $(2)$, we let $\rho_\eta=e^{-t|z|^2-\varphi_s}\tilde\delta_\Omega$ for $t=\frac{1}{2r^2}\left(\frac{1}{\eta}-\frac{1}{s}\right)>0$.  As before, we assume that $U_\eta\subset U_s$ has been chosen sufficiently small so that Lemma \ref{lem:lambda_determinant} applies.

Fix $z\in U_\eta$ and let $L\in T^{1,0}_z$.  Set $\tau=L-\partial\tilde\delta_\Omega(L)\nu$, so that $\partial\tilde\delta_\Omega(\tau)=0$.  Set $\epsilon=\frac{\eta(1-s)}{(1-\eta)s}$, so that $\frac{\epsilon}{1-\epsilon}=\frac{\eta(1-s)}{s-\eta}$.  We may use \eqref{eq:basic_inequality} to obtain
\begin{multline*}
  \abs{\partial\varphi_s(\tau)|_z-2\alpha_\Omega(\tau)|_{\xi_\Omega(z)}}^2\\
  \geq\frac{\eta(1-s)}{(1-\eta)s}\abs{\left(t\partial|z|^2(\tau)+\partial\varphi_s(\tau)\right)|_z-2\alpha_\Omega(\tau)|_{\xi_\Omega(z)}}^2-\left(\frac{\eta(1-s)}{s-\eta}\right)2r^2t^2|\tau|^2.
\end{multline*}
Substituting this into \eqref{eq:varphi_hessian_estimate_hypothesis}, we have
\begin{multline*}
  \left(N_s-\left(\frac{\eta s}{s-\eta}\right)2r^2t^2\right)\abs{\tau}^2+\frac{\eta}{1-\eta}\abs{\left(t\partial|z|^2(\tau)+\partial\varphi_s(\tau)\right)|_z-2\alpha_\Omega(\tau)|_{\xi_\Omega(z)}}^2\leq \\
  i\ddbar\varphi_s(\tau,\bar\tau)|_z
  +(-\tilde\delta_\Omega(z))^{-1}i\ddbar\tilde\delta_\Omega(\tau,\bar\tau)|_{\xi_\Omega(z)}+2\beta_\Omega(\tau,\bar\tau)|_{\xi_\Omega(z)}.
\end{multline*}
Combining this with \eqref{eq:lambda_determinant}, we obtain
\begin{multline*}
  \frac{1}{\eta(-\lambda_\eta(z))}\left(i\ddbar\lambda_\eta(\tau,\bar\tau)|_z-M_\eta(-\lambda_\eta(z))|\tau|^2-\frac{\abs{i\ddbar\lambda_\eta(\tau,\bar\nu)|_z}^2}{i\ddbar\lambda_\eta(\nu,\bar\nu)|_z-2M_\eta(-\lambda_\eta(z))}\right)\\
  \geq\left(\frac{1}{2r^2}\left(\frac{1}{\eta}-\frac{1}{s}\right)-\eta^{-1}M_\eta+N_s\right)\abs{\tau}^2+\Theta_\eta(\tau,\bar\tau),
\end{multline*}
If we choose $U_\eta$ sufficiently small so that
\[
  \Theta_\eta(\tau,\bar\tau)\geq-\left(\frac{1}{2r^2}\left(\frac{1}{\eta}-\frac{1}{s}\right)-\eta^{-1}M_\eta+N_s\right)|\tau|^2
\]
on $U_\eta\cap\Omega$, then we have
\begin{multline*}
  \frac{1}{\eta(-\lambda_\eta(z))}\left(i\ddbar\lambda_\eta(\tau,\bar\tau)|_z-M_\eta(-\lambda_\eta(z))|\tau|^2-\frac{\abs{i\ddbar\lambda_\eta(\tau,\bar\nu)|_z}^2}{i\ddbar\lambda_\eta(\nu,\bar\nu)|_z-2M_\eta(-\lambda_\eta(z))}\right)\\
  \geq 0
\end{multline*}
on $U_\eta\cap\Omega$.

Now,
\begin{multline*}
  i\ddbar\lambda_\eta(L,\bar L)-M_\eta(-\lambda_\eta)|L|^2\\
  =i\ddbar\lambda_\eta(\tau,\bar\tau)-M_\eta(-\lambda_\eta)|\tau|^2-2\re\left(i\ddbar\lambda_\eta(\tau,\bar\nu)\dbar\tilde\delta_\Omega(\bar L)\right)\\
  +(i\ddbar\lambda_\eta(\nu,\bar\nu)-2M_\eta(-\lambda_\eta))\abs{\partial\tilde\delta_\Omega(L)}^2.
\end{multline*}
Rearranging terms, we have
\begin{multline*}
  i\ddbar\lambda_\eta(L,\bar L)-M_\eta(-\lambda_\eta)|L|^2\\
  =i\ddbar\lambda_\eta(\tau,\bar\tau)-M_\eta(-\lambda_\eta)|\tau|^2-\frac{\abs{i\ddbar\lambda_\eta(\tau,\bar\nu)}^2}{i\ddbar\lambda_\eta(\nu,\bar\nu)-2M_\eta(-\lambda_\eta)}\\
  +(i\ddbar\lambda_\eta(\nu,\bar\nu)-2M_\eta(-\lambda_\eta))\abs{\partial\tilde\delta_\Omega(L)-\frac{i\ddbar\lambda_\eta(\tau,\bar\nu)}{i\ddbar\lambda_\eta(\nu,\bar\nu)-2M_\eta(-\lambda_\eta)}}^2.
\end{multline*}
Since the final term is positive on $U_\eta$, we have $i\ddbar\lambda_\eta(L,\bar L)-M_\eta(-\lambda_\eta)|L|^2\geq 0$ on $U_\eta\cap\Omega$.  Since $L$ was arbitrary, we are done.
\end{proof}

\section{Estimates for the Strong Diederich-Forn{\ae}ss Index}

\label{sec:estimates}

Proposition \ref{prop:C2_equivalence} allows us to provide a quantitative statement of Diederich and Forn{\ae}ss's original result.  It is of great interest to note that this lower bound for the Diederich-Forn{\ae}ss index depends entirely on the size of the form $\alpha_\Omega$ when restricted to the null-space of the Levi-form and the size of $\Omega$.  These are not biholomorphic invariants, but this should be expected since our method depends heavily on the function $|z|^2$ generating the K\"ahler form for the Euclidean metric.  It is known that $\alpha_\Omega$ is $d$-closed when restricted to a complex submanifold in the boundary \cite{BoSt93}.  When this restriction is also $d$-exact, this can be used to construct an improved weight function $\varphi$ and strengthen this result, as studied by the author in \cite{Har19}.
\begin{cor}
\label{cor:DF_universal}
  Let $\Omega\subset\mathbb{C}^n$ be a bounded pseudoconvex domain with $C^2$ boundary.  Let $r=\sup_\Omega|z|$.  For $p\in b\Omega$, define
  \[
    \mathcal{N}_p(\Omega)=\set{\tau\in T^{1,0}(b\Omega):i\ddbar\tilde\delta_\Omega(\tau,\bar\theta)|_p=0\text{ for all }\theta\in T^{1,0}(b\Omega)}
  \]
  and
  \[
    A=\sup_{\set{p\in b\Omega:\mathcal{N}_p(\Omega)\neq\emptyset}}\sup_{\tau\in\mathcal{N}_p(\Omega)\backslash\{0\}}\frac{\abs{\alpha_\Omega(\tau)}}{\abs{\tau}},
  \]
  unless $\Omega$ is strictly pseudoconvex, in which case $A=0$.
  Then
  \[
    DF_{C^2}(\Omega)\geq\frac{1}{1+4\sqrt{2}A r}
  \]
\end{cor}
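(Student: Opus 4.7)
The approach is to apply Proposition~\ref{prop:C2_equivalence}(2) with the quadratic weight $\varphi_s = K|z|^2$ for a constant $K>0$ to be optimized.  With this choice $i\ddbar\varphi_s = 2K\omega$ and $\partial\varphi_s = K\partial|z|^2$, so the regularity condition \eqref{eq:varphi_hessian_vanishing} is automatic and \eqref{eq:varphi_hessian_estimate_hypothesis} reduces to a pointwise algebraic inequality.  Since $\beta_\Omega$ is positive semi-definite and $i\ddbar\tilde\delta_\Omega(\tau,\bar\tau)|_{\xi_\Omega(z)}\geq 0$ for complex tangential $\tau$ by pseudoconvexity, both of the boundary-geometric terms on the right-hand side of \eqref{eq:varphi_hessian_estimate_hypothesis} are nonnegative and can be dropped harmlessly on the Levi null space.

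Fix $s<s^*:=(1+4\sqrt{2}Ar)^{-1}$.  Using $|\partial|z|^2(\tau)|\leq\sqrt{2}r|\tau|$ and the bound $|\alpha_\Omega(\tau)|\leq A|\tau|$ valid for $\tau\in\mathcal{N}_{\xi_\Omega(z)}(\Omega)$, the triangle inequality gives
\[
    \frac{s}{1-s}\bigl|K\partial|z|^2(\tau)-2\alpha_\Omega(\tau)\bigr|^2 \leq \frac{s}{1-s}\bigl(K\sqrt{2}r+2A\bigr)^2|\tau|^2.
\]
Taking $K=\frac{1-s}{2sr^2}-\frac{\sqrt{2}A}{r}$ (which is positive because $s<\frac{1}{1+2\sqrt{2}Ar}$) minimizes the discrepancy, and a short computation shows that \eqref{eq:varphi_hessian_estimate_hypothesis} is satisfied on $\mathcal{N}_{\xi_\Omega(z)}(\Omega)$ with $N_s=\frac{1-s-4\sqrt{2}sAr}{2sr^2}>0$.

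Extending this inequality to \emph{all} complex tangential $\tau$ is the main technical step.  By compactness of the unit sphere bundle over $b\Omega$ and continuity of $\alpha_\Omega$ and the Levi form, for every $\eps>0$ there exists $\delta>0$ such that $i\ddbar\tilde\delta_\Omega(\tau,\bar\tau)|_p\leq\delta|\tau|^2$ forces $|\alpha_\Omega(\tau)|\leq(A+\eps)|\tau|$; otherwise a subsequence of offending directions would converge to a null vector violating the definition of $A$.  For $\tau$ in this ``nearly null'' regime the null-space computation applies with $A+\eps$ in place of $A$, at the cost of slightly shrinking $s$.  For $\tau$ outside this regime $(-\tilde\delta_\Omega(z))^{-1}i\ddbar\tilde\delta_\Omega(\tau,\bar\tau)|_{\xi_\Omega(z)}\geq\delta/(-\tilde\delta_\Omega(z))$ blows up as $z\to b\Omega$ and dominates the bounded quantity $\frac{s}{1-s}(K\sqrt{2}r+2\sup_{b\Omega}|\alpha_\Omega|)^2|\tau|^2$ once $z$ lies in a sufficiently small neighborhood $U_s$ of $b\Omega$, so \eqref{eq:varphi_hessian_estimate_hypothesis} holds there as well.

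Proposition~\ref{prop:C2_equivalence}(2) then produces, for each $\eta\in(0,s)$, a $C^2$ defining function $\rho_\eta$ with $-(-\rho_\eta)^\eta$ plurisubharmonic, since $N_s>0$ permits $M_\eta=0$.  Hence $DF_{C^2}(\Omega)\geq s$ for every $s<s^*$, and letting $s\to s^*$ proves the corollary; the strictly pseudoconvex case $A=0$ is automatic because every tangential direction then falls into the Levi-dominant branch of the compactness argument.  The principal obstacle is precisely this last uniform extension, since $A$ is defined solely over null vectors at boundary points where the null space is nonempty, whereas Proposition~\ref{prop:C2_equivalence}(2) demands control at every tangential $\tau$ throughout an internal neighborhood of $b\Omega$.
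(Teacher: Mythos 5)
Your proposal is correct and rests on the same underlying machinery as the paper---verify hypothesis \eqref{eq:varphi_hessian_estimate_hypothesis} for an explicit weight and then invoke Proposition \ref{prop:C2_equivalence}(2)---but the bookkeeping is arranged differently in two respects, and the differences are worth recording. First, the paper takes $\varphi_s=0$, fixes $s=\frac{1}{1+2\sqrt{2}Ar}$, and allows a \emph{negative} $N_s$ (any $N_s<-\frac{\sqrt{2}A}{r}$ above the stated lower bound), relying on the positive term $\frac{1}{2r^2}\left(1-\frac{\eta}{s}\right)$ in Proposition \ref{prop:C2_equivalence}(2)---that is, on the factor $e^{-t|z|^2}$ already built into that construction and on the gap between $\eta<\frac{1}{1+4\sqrt{2}Ar}$ and $s$---to absorb the negativity; you instead place the quadratic weight explicitly in $\varphi_s=K|z|^2$ with the optimized $K=\frac{1-s}{2sr^2}-\frac{\sqrt{2}A}{r}$ and obtain a positive $N_s=\frac{1-s-4\sqrt{2}sAr}{2sr^2}$ for every $s<(1+4\sqrt{2}Ar)^{-1}$. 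Since $\rho_\eta=e^{-t|z|^2-\varphi_s}\tilde\delta_\Omega$ in the proof of Proposition \ref{prop:C2_equivalence}(2), this amounts to a re-parametrization of the same estimate (your constants do check out), shifting weight from the $\eta$-versus-$s$ gap into $\varphi_s$. Second, for the passage from the Levi null space to all tangential vectors, the paper decomposes $\tau=\tau_1+\tau_2$ orthogonally at each boundary point, controls cross terms using the smallest positive Levi eigenvalue $\ell_p$, and patches with local neighborhoods and compactness of $b\Omega$; you use a dichotomy on whole vectors (nearly null versus Levi-dominant) with a uniform $\delta$ coming from a compactness/contradiction argument showing that $i\ddbar\tilde\delta_\Omega(\tau,\bar\tau)|_p\leq\delta|\tau|^2$ forces $\abs{\alpha_\Omega(\tau)}\leq(A+\eps)|\tau|$. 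That argument is sound (for a positive semi-definite Levi form, a vanishing diagonal value puts the limit vector in $\mathcal{N}_p$, contradicting the definition of $A$), and is arguably cleaner than the paper's pointwise eigenvalue, though you should state precisely how the $A+\eps$ loss is absorbed (re-optimize $K$ for $A+\eps$, or shrink $N_s$ slightly, which is harmless for $s$ strictly below the threshold). One finishing step is missing: Proposition \ref{prop:C2_equivalence}(2) only yields plurisubharmonicity of $-(-\rho_\eta)^\eta$ on a collar $U_\eta\cap\Omega$, so, as the paper does, you must apply Lemma \ref{lem:psh_extensions} to extend to all of $\Omega$ before concluding $DF_{C^2}(\Omega)\geq\eta$; this is a one-line fix, not a genuine gap.
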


\begin{proof}
  Fix $0<\eta<\frac{1}{1+4\sqrt{2}A r}$.  Then $\frac{1}{2r^2}+\frac{\sqrt{2}A}{r}-\frac{1}{2r^2\eta}<-\frac{\sqrt{2}A}{r}$, so we may choose $N_s$ satisfying $\frac{1}{2r^2}+\frac{\sqrt{2}A}{r}-\frac{1}{2r^2\eta}<N_s<-\frac{\sqrt{2}A}{r}$.  Let $s=\frac{1}{1+2\sqrt{2}A r}$, and note that $s>\eta$ necessarily.  Fix $p\in b\Omega$.  Let $\ell_p$ denote the smallest eigenvalue of the Levi-form at $p$.  For $\tau\in T^{1,0}(b\Omega)$, we have a decomposition $\tau|_p=\tau_1+\tau_2$ where $\tau_1\in\mathcal{N}_p(\Omega)$ (which may be trivial) and $\tau_2$ is orthogonal to $\mathcal{N}_p(\Omega)$.  Hence, $i\ddbar\tilde\delta_\Omega(\tau,\bar\tau)\geq\ell_p|\tau_2|^2$.  At $p$,
  \begin{multline*}
    N_s\abs{\tau}^2+\frac{\sqrt{2}}{A r}\abs{\alpha_\Omega(\tau)}^2\leq\\
    \left(N_s+\frac{\sqrt{2}A}{r}\right)\abs{\tau_1}^2+N_s\abs{\tau_2}^2+\frac{2\sqrt{2}}{r}|\tau_1|\abs{\alpha_\Omega(\tau_2)}+\frac{\sqrt{2}}{A r}\abs{\alpha_\Omega(\tau_2)}^2.
  \end{multline*}
  For any $0<\epsilon<-\left(N_s+\frac{\sqrt{2}A}{r}\right)$, there exists $C_\epsilon>0$ such that
  \[
    N_s\abs{\tau}^2+\frac{\sqrt{2}}{A r}\abs{\alpha_\Omega(\tau)}^2\leq
    -\epsilon\abs{\tau_1}^2+C_\epsilon\abs{\tau_2}^2\leq-\epsilon\abs{\tau}^2+(C_\epsilon+\epsilon)\ell_p^{-1}i\ddbar\tilde\delta_\Omega(\tau,\bar\tau).
  \]
  Since $\beta$ is a positive semi-definite form, we have a neighborhood $U_p$ of $p$ such that
  \[
    N_s\abs{\tau}^2+\frac{\sqrt{2}}{A r}\abs{\alpha_\Omega(\tau)|_{\xi_\Omega(z)}}^2\leq
    (-\tilde\delta_\Omega(z))^{-1}i\ddbar\tilde\delta_\Omega(\tau,\bar\tau)|_{\xi_\Omega(z)}+2\beta_\Omega(\tau,\bar\tau)|_{\xi_\Omega(z)}
  \]
  for all $z\in U_p\cap\Omega$ and $\tau\in T^{1,0}_z$ satisfying $\partial\tilde\delta_\Omega(\tau)|_z=0$.  Since $\frac{s}{1-s}=\frac{1}{2\sqrt{2}A r}$ and $b\Omega$ is compact, there exists a neighborhood $U_s$ of $b\Omega$ such that \eqref{eq:varphi_hessian_estimate_hypothesis} is satisfied on $U_s\cap\Omega$ for $\varphi_s=0$.

  Now $\frac{1}{2r^2}\left(\frac{1}{\eta}-\frac{1}{s}\right)+N_s>0$, so we may choose $0<M_\eta<\frac{1}{2r^2}\left(1-\frac{\eta}{s}\right)+\eta N_s$ so that Proposition \ref{prop:C2_equivalence} $(2)$ implies the existence of $U_\eta$ and a $C^2$ defining function $\rho_\eta$ such that $-(-\rho_\eta)^\eta$ is strictly plurisubharmonic on $U_\eta\cap\Omega$.  Using Lemma \ref{lem:psh_extensions}, we may extend $\rho_\eta$ to all of $\Omega$, so $DF_{C^2}(\Omega)>\eta$ whenever $0<\eta<\frac{1}{1+4\sqrt{2}A r}$.
\end{proof}

It is also of interest to consider weight functions with self-bounded gradients, as defined by McNeal in \cite{McN02}.  Recall that a domain $\Omega\subset\mathbb{C}^n$ is said to satisfy Property $(\tilde P)$ if for every $B>0$ there exists a plurisubharmonic function $\phi\in C^2(\overline\Omega)$ such that $i\ddbar\phi\geq i\partial\phi\wedge\dbar\phi$ on $\Omega$ and $i\ddbar\phi\geq B\omega$ on $b\Omega$.
\begin{cor}
\label{cor:DF_P_tilde}
  Let $\Omega\subset\mathbb{C}^n$ be a bounded pseudoconvex domain with $C^2$ boundary.  Let $\mathcal{N}_p(\Omega)$ and $A$ be as in Corollary \ref{cor:DF_universal}.  Suppose that for some $B>0$ there exists a neighborhood $U_B$ of $b\Omega$ and a function $\phi_B\in C^2(U_B)$ such that if $\tau\in\mathcal{N}_p(\Omega)$ for some $p\in b\Omega$, then we have $i\ddbar\phi_B(\tau,\bar\tau)|_p\geq B|\tau|^2$ and $i\ddbar\phi_B(\tau,\bar\tau)|_p\geq\abs{\partial\phi_B(\tau)|_p}^2$.
  Then
  \[
    DF_{C^2}(\Omega)\geq\frac{B}{16A^2+B}.
  \]
  In particular, if $\Omega$ satisfies Property $(\tilde P)$, then $DF_{C^2}(\Omega)=1$.
\end{cor}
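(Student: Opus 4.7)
The plan is to apply Proposition \ref{prop:C2_equivalence}(2) with $\varphi_s$ taken to be a scalar multiple of the hypothesized function $\phi_B$. Fix $\eta$ with $0 < \eta < \frac{B}{16A^2+B}$; it suffices to produce $s > \eta$, a constant $N_s > 0$, and a function $\varphi_s \in C^2$ satisfying \eqref{eq:varphi_hessian_estimate_hypothesis} on some neighborhood of $b\Omega$. Then Proposition \ref{prop:C2_equivalence}(2) yields an admissible $M_\eta > 0$ (since $N_s > 0$ automatically gives $\frac{1}{2r^2}(1-\eta/s)+\eta N_s > 0$), and Lemma \ref{lem:psh_extensions} globalizes to give $\rho_\eta$ with $-(-\rho_\eta)^\eta$ plurisubharmonic on $\Omega$. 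The candidate $\varphi_s = c\phi_B$ for a suitable $c > 0$ is $C^2$ on a neighborhood of $b\Omega$, so \eqref{eq:varphi_hessian_vanishing} holds trivially.

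The heart of the verification concerns tangential $\tau$ lying in $\mathcal{N}_p$ at $p = \xi_\Omega(z)$, where the Levi term in the right-hand side of \eqref{eq:varphi_hessian_estimate_hypothesis} vanishes and only the $i\ddbar\varphi_s$ piece (plus $2\beta_\Omega\geq 0$) is available. Rearranging Lemma \ref{lem:basic_inequality} into the upper bound $|\partial\varphi_s(\tau) - 2\alpha_\Omega(\tau)|^2 \leq \epsilon^{-1}|\partial\varphi_s(\tau)|^2 + 4(1-\epsilon)^{-1}|\alpha_\Omega(\tau)|^2$ valid for any $0<\epsilon<1$, and applying the self-bounded gradient $|\partial\phi_B(\tau)|^2 \leq i\ddbar\phi_B(\tau,\bar\tau)$ together with $i\ddbar\phi_B(\tau,\bar\tau)\geq B|\tau|^2$ and $|\alpha_\Omega(\tau)|\leq A|\tau|$ on $\mathcal{N}_p$, reduces \eqref{eq:varphi_hessian_estimate_hypothesis} on $\mathcal{N}_p$-vectors to the scalar inequality
\[
  N_s + \frac{4sA^2}{(1-\epsilon)(1-s)} \leq \Bigl(c - \frac{c^2 s}{\epsilon(1-s)}\Bigr)B.
\]
Optimizing the right side over $c$ (at $c = \epsilon(1-s)/(2s)$, giving $\frac{\epsilon(1-s)B}{4s}$) and balancing $\epsilon$ against $A^2/B$ delivers the threshold $s < \frac{B}{16A^2+B}$ as the region where a strictly positive $N_s$ can be chosen.

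For $\tau$ outside $\mathcal{N}_p$, decompose $\tau = \tau_1+\tau_2$ with $\tau_1\in\mathcal{N}_p$ and $\tau_2$ in the Levi-positive part. The Levi form supplies $(-\tilde\delta(z))^{-1}i\ddbar\tilde\delta_\Omega(\tau_2,\bar\tau_2)|_p \geq \ell_p|\tau_2|^2$ (with $\ell_p>0$ uniform on $b\Omega$ by compactness), and combined with $\beta_\Omega\geq 0$ this absorbs both the $\tau_2$-piece of the LHS and the cross terms arising from $|\alpha_\Omega(\tau_1)+\alpha_\Omega(\tau_2)|^2$, via a Cauchy--Schwarz splitting exactly as in the proof of Corollary \ref{cor:DF_universal}.

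Finally, the ``in particular'' clause follows because Property $(\tilde P)$ supplies, for every $B > 0$, a function $\phi\in C^2(\overline\Omega)$ with $i\ddbar\phi\geq i\partial\phi\wedge\dbar\phi$ on $\Omega$ and $i\ddbar\phi\geq B\omega$ on $b\Omega$; both corollary hypotheses hold on $\mathcal{N}_p$ with this $\phi$, so $DF_{C^2}(\Omega) \geq \frac{B}{16A^2+B}$ for every $B$, whence $DF_{C^2}(\Omega)\geq 1$ upon sending $B\to\infty$, and equality follows from the trivial upper bound. The main technical obstacle is the simultaneous optimization of $(c,\epsilon,N_s)$ so that the sharp threshold lands at $\frac{B}{16A^2+B}$, and the uniform passage from the pointwise $\mathcal{N}_p$-estimate to the full tangential inequality near $b\Omega$.
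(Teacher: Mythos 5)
Your strategy is essentially the paper's (take $\varphi_s$ to be a constant multiple of $\phi_B$, verify \eqref{eq:varphi_hessian_estimate_hypothesis} on null directions via Lemma \ref{lem:basic_inequality}, absorb the non-null directions with the Levi form as in Corollary \ref{cor:DF_universal}, then invoke Proposition \ref{prop:C2_equivalence} $(2)$ and Lemma \ref{lem:psh_extensions}, and let $B\to\infty$ for Property $(\tilde P)$), but the quantitative step does not deliver the stated constant. Your reduction to $N_s+\frac{4sA^2}{(1-\epsilon)(1-s)}\leq\bigl(c-\frac{c^2s}{\epsilon(1-s)}\bigr)B$ is correct, and so is the optimization in $c$, which leaves $N_s+\frac{4sA^2}{(1-\epsilon)(1-s)}\leq\frac{\epsilon(1-s)B}{4s}$. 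However, maximizing the right-hand side minus the $A^2$-term over $\epsilon\in(0,1)$ gives $\frac{(1-s)B}{4s}-2A\sqrt{B}$ (attained at $1-\epsilon=\frac{4sA}{(1-s)\sqrt{B}}$), which is positive precisely when $\frac{s}{1-s}<\frac{\sqrt{B}}{8A}$, i.e.\ $s<\frac{\sqrt{B}}{\sqrt{B}+8A}$ --- not when $s<\frac{B}{16A^2+B}$. The two thresholds agree only when $B=4A^2$, and for $B>4A^2$ (the relevant regime for large $B$) yours is strictly smaller. Since no choice of the scalar $c$ and of $\epsilon$ makes your displayed inequality hold with some $N_s>0$ once $s\geq\frac{\sqrt{B}}{\sqrt{B}+8A}$, the argument as written proves $DF_{C^2}(\Omega)\geq\frac{B}{B+8A\sqrt{B}}$ rather than the claimed $\frac{B}{16A^2+B}$. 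The ``in particular'' clause survives, since $\frac{B}{B+8A\sqrt{B}}\to 1$ as $B\to\infty$, but the main inequality of the corollary is not established by your computation.

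For what it is worth, your (correct) optimization exposes a slip in the paper's own proof: there, with $\varphi_s=t\phi_B$ and $\epsilon=\frac{2ts}{1-s}$ in Lemma \ref{lem:basic_inequality}, the subtracted term should be $\frac{s}{1-s-2ts}4A^2|\tau|^2$ rather than $\frac{2ts}{1-s-2ts}4A^2|\tau|^2$, and with the corrected term the paper's choice of $t$ likewise only yields positivity below a threshold of the form $\frac{\sqrt{B}}{\sqrt{B}+8A}$; the paper's choice of $t$ and $\epsilon$ is in fact the special case $c=t$, $\epsilon=\frac{2cs}{1-s}$ of your two-parameter family, so your optimization is at least as good as anything available along that route. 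To reach $\frac{B}{16A^2+B}$ you would need a genuinely different weight (not a constant multiple of $\phi_B$) or stronger hypotheses; otherwise you should state the bound you actually proved. One further small caution: your parenthetical claim that $\ell_p$ is uniformly positive on $b\Omega$ is not justified, since the smallest positive eigenvalue of the Levi form can degenerate along $b\Omega$; the correct route is the purely local absorption near each $p$ followed by patching finitely many neighborhoods by compactness, exactly as in the proof of Corollary \ref{cor:DF_universal} to which you defer.
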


\begin{proof}
  Let $0<\eta<\frac{B}{16A^2+B}$.  Fix
  \[
    \eta<s<\frac{B}{16A^2+B}\text{ and }0<N_s<\left(\frac{1}{2}\sqrt{\frac{(1-s)B}{s}}-2A\right)^2.
  \]
  Set $t=\frac{1-s}{2s}-\sqrt{\frac{1-s}{B s}}2A>0$.  Let $\varphi_s=t\phi_B$.  Fix $p\in b\Omega$ and $\tau\in\mathcal{N}_p(\Omega)$.  Then at $p$,
  \[
    i\ddbar\varphi_s(\tau,\bar\tau)\geq \frac{1}{2t}\abs{\partial\varphi_s(\tau)}^2+\frac{t}{2}B|\tau|^2.
  \]
  Using \eqref{eq:basic_inequality} with $\epsilon=\frac{2ts}{1-s}$, we have
  \[
    i\ddbar\varphi_s(\tau,\bar\tau)\geq\frac{s}{1-s}\abs{\partial\varphi_s(\tau)-2\alpha_\Omega(\tau)}^2+\left(\frac{t}{2}B-\frac{2ts}{1-s-2ts}4A^2\right)|\tau|^2.
  \]
  Note that
  \[
    \frac{t}{2}B-\frac{2ts}{1-s-2ts}4A^2=\left(\frac{1}{2}\sqrt{\frac{(1-s)B}{s}}-2A\right)^2>N_s.
  \]

  Let $\ell_p$ denote the smallest eigenvalue of the Levi-form at $p$.  For $\tau\in T^{1,0}(b\Omega)$, we have a decomposition $\tau|_p=\tau_1+\tau_2$ where $\tau_1\in\mathcal{N}_p(\Omega)$ (which may be trivial) and $\tau_2$ is orthogonal to $\mathcal{N}_p(\Omega)$.  Hence, $i\ddbar\tilde\delta_\Omega(\tau,\bar\tau)\geq\ell_p|\tau_2|^2$.  At $p$,
  \begin{multline*}
    N_s\abs{\tau}^2+\frac{s}{1-s}\abs{\partial\varphi_s(\tau)-2\alpha_\Omega(\tau)}^2-i\ddbar\varphi_s(\tau,\bar\tau)\\
    \leq\left(N_s-\left(\frac{1}{2}\sqrt{\frac{(1-s)B}{s}}-2A\right)^2\right)\abs{\tau_1}^2+N_s\abs{\tau_2}^2\\
    +\frac{s}{1-s}(\abs{2\partial\varphi_s(\tau_1)-2\alpha_\Omega(\tau_1)}\abs{2\partial\varphi_s(\tau_2)-2\alpha_\Omega(\tau_2)}+\abs{2\partial\varphi_s(\tau_2)-2\alpha_\Omega(\tau_2)}^2)\\
    -2\re(i\ddbar\varphi_s(\tau_1,\bar\tau_2))-\abs{i\ddbar\varphi_s(\tau_2,\bar\tau_2)}^2.
  \end{multline*}
  Since first and second derivatives of $\varphi$ are uniformly bounded on $b\Omega$, there exists a sufficiently large constant $C>0$ such that
  \[
    N_s\abs{\tau}^2+\frac{s}{1-s}\abs{\partial\varphi_s(\tau)-2\alpha_\Omega(\tau)}^2-i\ddbar\varphi_s(\tau,\bar\tau)
    \leq C\abs{\tau_2}^2\leq C\ell_p^{-1}i\ddbar\tilde\delta_\Omega(\tau,\bar\tau).
  \]
  Since $\beta$ is a positive semi-definite form, we have a neighborhood $U_p$ of $p$ such that \eqref{eq:varphi_hessian_estimate_hypothesis} holds on $U_p\cap\Omega$.  Since $b\Omega$ is compact, we conclude that \eqref{eq:varphi_hessian_estimate_hypothesis} holds on $U_s\cap\Omega$ for some neighborhood $U_s$ of $b\Omega$.  Combining the conclusions of Proposition \ref{prop:C2_equivalence} $(2)$ with Lemma \ref{lem:psh_extensions}, we have a $C^2$ defining function $\rho_\eta$ such that $-(-\rho_\eta)^\eta$ is plurisubharmonic on $\Omega$.

  Since Property $(\tilde P)$ guarantees the existence of $\phi_B$ for every $B>0$, it follows that $DF_{C^2}(\Omega)=1$.
\end{proof}

\section{The Strong Diederich-Forn{\ae}ss Index on $C^k$ Domains}

On $C^k$ domains, with $k\geq 3$, we have a considerably simpler characterization of the strong Diederich-Forn{\ae}ss index.  As in Liu's work \cite{Liu17a}, this demonstrates that the strong Diederich-Forn{\ae}ss index on $C^3$ domains can be completely understood in terms of the existence of good weight functions on the boundary of the domain.  Liu's work has the advantage of working with arbitrary defining functions, while we require the use of the signed distance function.  However, since \eqref{eq:Weinstock} does not hold for an arbitrary defining function $\rho$, we must use third derivatives of $\rho$ to estimate the difference between the hessian of $\rho$ at $x$ and the hessian of $\rho$ at $\xi_\Omega(x)$, so we would lose our results on $C^2$ domains.  This is in close parallel to the comparison between Diederich and Forn{\ae}ss's original result on $C^2$ domains \cite{DiFo77b}, which use the distance function (or the pullback of this function to a Stein manifold embedded in $\mathbb{C}^n$), and Range's proof of the same result on $C^3$ domains \cite{Ran81}, which used an arbitrary defining function.

\begin{prop}
\label{prop:Ck_equivalence}
  Let $\Omega\subset\mathbb{C}^n$ be a bounded pseudoconvex domain with $C^k$ boundary, $3\leq k\leq \infty$, and let $r=\sup_\Omega|z|$.
  \begin{enumerate}
    \item Suppose that for some $0<\eta<1$ there exists a $C^2$ defining function $\rho_\eta$ for $\Omega$ with the property that $\lambda=-(-\rho_\eta)^\eta$ is plurisubharmonic on $U_\eta\cap\Omega$ for some neighborhood $U_\eta$ of $b\Omega$.  Then for every $0<s<\eta$ there exists a neighborhood $U_s\subset U_\eta$ of $b\Omega$ and a function $\varphi_s\in C^\infty(U_s)$ such that for every $\tau\in T^{1,0}(b\Omega)$, on $b\Omega$ we have
        \begin{equation}
        \label{eq:varphi_hessian_estimate_conclusion_smooth}
          \frac{s}{1-s}\abs{\partial\varphi_s(\tau)-2\alpha(\tau)}^2<i\ddbar\varphi_s(\tau,\bar\tau)+2\beta(\tau,\bar\tau).
        \end{equation}

    \item Suppose that for some $0<s<1$ there exists a neighborhood $U_s$ of $b\Omega$ and a function $\varphi_s\in C^k(U_s)$ such that for every $\tau\in T^{1,0}(b\Omega)$, on $b\Omega$ we have
        \begin{equation}
        \label{eq:varphi_hessian_estimate_hypothesis_smooth}
          \frac{s}{1-s}\abs{\partial\varphi_s(\tau)-2\alpha(\tau)}^2\leq i\ddbar\varphi_s(\tau,\bar\tau)
          +2\beta(\tau,\bar\tau).
        \end{equation}
        Then for every $0<\eta<s$, there exists a $C^k$ defining function $\rho_\eta$ for $\Omega$ with the property that $\lambda=-(-\rho_\eta)^\eta$ is strictly plurisubharmonic on $U_\eta\cap\Omega$ for some neighborhood $U_\eta$ of $b\Omega$.
  \end{enumerate}
\end{prop}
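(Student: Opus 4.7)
The plan is to derive both directions from Proposition \ref{prop:C2_equivalence} by exploiting the extra regularity available on a $C^k$ domain ($k \geq 3$): the signed distance function $\tilde\delta_\Omega$ is $C^k$ on a neighborhood of $b\Omega$, and the hermitian invariants $\alpha_\Omega$ and $\beta_\Omega$ are $C^{k-2}$ there. The key simplification is that the term $(-\tilde\delta_\Omega)^{-1} i\ddbar\tilde\delta_\Omega(\tau,\bar\tau)|_{\xi_\Omega(z)}$ appearing in the $C^2$ inequalities vanishes in Levi-null directions on $b\Omega$ and dominates in non-null ones, effectively reducing the pointwise inequality to one purely on $b\Omega$.

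For part $(2)$, I start with $\varphi_s \in C^k(U_s)$ satisfying \eqref{eq:varphi_hessian_estimate_hypothesis_smooth} and establish the interior hypothesis \eqref{eq:varphi_hessian_estimate_hypothesis} of Proposition \ref{prop:C2_equivalence}$(2)$ on a thin neighborhood $U_\eta \subset U_s$ with some $N_s' > 0$. At $p \in b\Omega$ and $\tau \in \mathcal{N}_p(\Omega)$ the distance Hessian term vanishes, and strictness of \eqref{eq:varphi_hessian_estimate_hypothesis_smooth} combined with compactness of the unit null sphere at boundary points yields a uniform slack serving as $N_s'$. For non-null $\tau$, the Levi form is bounded below by a positive multiple of $|\tau|^2$ on compact pieces of $b\Omega$, so a decomposition $\tau = \tau_1 + \tau_2$ into null and non-null components (in the spirit of Corollary \ref{cor:DF_universal}) promotes this to the full interior inequality on a sufficiently thin $U_\eta$. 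Proposition \ref{prop:C2_equivalence}$(2)$ then delivers $\rho_\eta = e^{-t|z|^2 - \varphi_s}\tilde\delta_\Omega$, which is $C^k$ because both factors are.

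For part $(1)$, I apply Proposition \ref{prop:C2_equivalence}$(1)$ with $M_\eta = 0$, some $s' \in (s, \eta)$, and small $N_{s'} > 0$ to obtain $\varphi_{s'} \in C^1(U_{s'}) \cap C^2(U_{s'} \setminus b\Omega)$ satisfying \eqref{eq:varphi_hessian_estimate_conclusion}. The boundary $1$-jet of $\varphi_{s'}$ is a pair of $C^1$ functions on the $C^k$ boundary. For tangential $\tau$, the value $i\ddbar\varphi_s(\tau,\bar\tau)|_p$ is determined by this $1$-jet, via second tangential derivatives of $\varphi_s|_{b\Omega}$ plus the normal derivative paired against the boundary curvature. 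Taking $z \to p$ in \eqref{eq:varphi_hessian_estimate_conclusion} along the normal for $\tau \in \mathcal{N}_p$ yields the strict boundary inequality for $\varphi_{s'}$ with slack $N_{s'}$; for non-null $\tau$, the corresponding boundary inequality is extracted by analyzing the bounded part of $i\ddbar\log h_\eta|_p$ via the Taylor structure of $h_\eta = \rho_\eta / \tilde\delta_\Omega$ furnished by Lemma \ref{lem:defining_function_comparison}. Finally, I tangentially mollify the boundary $1$-jet and extend smoothly into $U_s$ to obtain $\varphi_s \in C^\infty(U_s)$, with the slack from $s' > s$ and $N_{s'} > 0$ absorbing the mollification error.

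The main obstacle is the regularization step in part $(1)$: turning the $C^1$ function $\varphi_{s'}$ into a smooth $\varphi_s$ while preserving the strict inequality whose right-hand side involves second derivatives. The mollification scale, the gap $s' - s$, and $N_{s'}$ must be calibrated carefully, using the $C^{k-2}$ regularity of $\alpha_\Omega$ and $\beta_\Omega$ together with uniform Hessian bounds on the tangentially-mollified boundary data.
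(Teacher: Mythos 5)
Your part (1) contains the genuine gap. Proposition \ref{prop:C2_equivalence}(1) only gives $\varphi_{s'}\in C^1(U_{s'})\cap C^2(U_{s'}\setminus b\Omega)$ with the normalization \eqref{eq:varphi_hessian_vanishing}, i.e.\ its second derivatives may blow up like $o(\delta_\Omega^{-1})$ at the boundary. Consequently there is no boundary Hessian $i\ddbar\varphi_{s'}(\tau,\bar\tau)|_p$ and no second tangential derivative of $\varphi_{s'}|_{b\Omega}$ for your normal limit $z\to p$ in \eqref{eq:varphi_hessian_estimate_conclusion} to converge to; the ``$1$-jet determines the tangential Hessian'' formula you invoke requires $C^2$ data on the boundary, which is exactly what is missing (and $i\ddbar\log h_\eta$ likewise need not extend to $b\Omega$ when $\rho_\eta$ is only $C^2$). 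The subsequent step is also not repairable by calibration: mollifying a merely $C^1$ function gives no pointwise lower bound on the Hessian of the mollification (the Hessian of the convolution is only $O(\eps^{-1})$, with no convergence), and since \eqref{eq:varphi_hessian_estimate_conclusion} bounds $i\ddbar\varphi_{s'}$ from below only in complex tangential directions, there is no plurisubharmonicity-type structure that convolution would preserve. The paper's resolution, and the place where $C^3$ regularity of $b\Omega$ actually enters, is different: pull the interior function back from a fixed inner level set by $\varphi_{t,\tilde s}(z)=\tilde\varphi_{\tilde s}(z-t\nabla\tilde\delta_\Omega(z))$, which is genuinely $C^2$ near $b\Omega$ because one has stepped distance $t$ away from the boundary and $\nabla\tilde\delta_\Omega\in C^{k-1}$; transfer \eqref{eq:varphi_hessian_estimate_conclusion} to $b\Omega$ with errors controlled by $t\norm{\tilde\delta_\Omega}_{C^3}$ and \eqref{eq:varphi_hessian_vanishing}; absorb the now finite Levi term by adding $t^{-1}\tilde\delta_\Omega$ to the weight (harmless, since $\partial\tilde\delta_\Omega(\tau)=0$ for tangential $\tau$); and only then regularize by convolution, which is legitimate because the function being mollified is already $C^2$, so the strict inequality with slack $\tfrac12 N_{\tilde s}|\tau|^2$ survives and yields \eqref{eq:varphi_hessian_estimate_conclusion_smooth}. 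Your outline never produces an object to which a Hessian-preserving smoothing can be applied, so part (1) does not go through as proposed.

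In part (2) there is a smaller, fixable error: \eqref{eq:varphi_hessian_estimate_hypothesis_smooth} is a non-strict inequality, so your claimed uniform slack $N_s'>0$ on the null sphere bundle does not follow; at a boundary point and null direction where equality holds, the distance-Hessian term vanishes and \eqref{eq:varphi_hessian_estimate_hypothesis} with positive $N$ can fail on every interior collar. The null/non-null decomposition is also unnecessary, since \eqref{eq:varphi_hessian_estimate_hypothesis_smooth} is assumed for all tangential $\tau$, not only Levi-null ones. The paper's route is simpler: for $0<\eta<s$ choose any $N_s$ with $-\frac{1}{2r^2}\left(\frac{1}{\eta}-\frac{1}{s}\right)<N_s<0$; then uniform continuity of the $C^k$ data $\varphi_s$, $\alpha_\Omega$, $\beta_\Omega$ together with pseudoconvexity (which makes $(-\tilde\delta_\Omega)^{-1}i\ddbar\tilde\delta_\Omega(\tau,\bar\tau)|_{\xi_\Omega(z)}\geq 0$) gives \eqref{eq:varphi_hessian_estimate_hypothesis} on a thin neighborhood, and Proposition \ref{prop:C2_equivalence}(2) still allows $M_\eta>0$, hence strict plurisubharmonicity, with $\rho_\eta=e^{-t|z|^2-\varphi_s}\tilde\delta_\Omega$ of class $C^k$ because $\tilde\delta_\Omega$ is $C^k$ near $b\Omega$. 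With that adjustment your part (2) is essentially correct.
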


\begin{proof}
  We first assume that we are given $\rho_\eta$ and $U_\eta$.  For $0<s<\eta$, choose $s<\tilde s<\eta$ and $0<N_{\tilde s}<\frac{1}{2r^2}\left(\frac{1}{\tilde s}-\frac{1}{\eta}\right)$.  Let $\tilde U_{\tilde s}$ and $\tilde\varphi_{\tilde s}$ be given by Proposition \ref{prop:C2_equivalence}.  For $t>0$ such that $\{z\in\Omega:\delta_\Omega(z)=t\}\subset\tilde U_{\tilde s}$, let $U_{t,\tilde s}=\{z\in\tilde U_{\tilde s}:z-t\nabla\tilde\delta_\Omega(z)\in\tilde U_{\tilde s}\}$.  On $U_{t,\tilde s}$, define $\varphi_{t,\tilde s}(z)=\tilde\varphi_{\tilde s}(z-t\nabla\tilde\delta_\Omega(z))$.

  Now, for $z\in b\Omega$ we compute
  \begin{multline*}
    i\ddbar\varphi_{t,\tilde s}(z)\geq i\ddbar\tilde\varphi_{\tilde s}(z-t\nabla\tilde\delta_\Omega(z))\\-O\left(t\norm{\tilde\varphi_{\tilde s}}_{C^1(\tilde U_{\tilde s})}\norm{\tilde\delta_\Omega}_{C^3(U)}+t\norm{\nabla^2\tilde\varphi_{\tilde s}}_{L^\infty(\{z\in\Omega:\delta_\Omega(z)=t\})}\norm{\tilde\delta_\Omega}_{C^2(U)}^2\right).
  \end{multline*}
  By \eqref{eq:varphi_hessian_vanishing}, this error term is of order $o(1)$ as $t\rightarrow 0^+$.  Since $\tilde\delta_\Omega(z-t\nabla\tilde\delta_\Omega(z))=-t$ and $\xi_\Omega(z-t\nabla\tilde\delta_\Omega(z))=z$ for $z\in b\Omega$, we may use \eqref{eq:varphi_hessian_estimate_conclusion} for any $\tau\in T^{1,0}(b\Omega)$ to obtain
\begin{multline*}
  N_{\tilde s}\abs{\tau}^2+\frac{\tilde s}{1-\tilde s}\abs{\partial\tilde\varphi_{\tilde s}(\tau)|_{z-t\nabla\tilde\delta_\Omega(z)}-2\alpha_\Omega(\tau)}^2-o(1)|\tau|^2\leq\\ i\ddbar\varphi_{t,\tilde s}(\tau,\bar\tau)+t^{-1}i\ddbar\tilde\delta_\Omega(\tau,\bar\tau)+2\beta_\Omega(\tau,\bar\tau).
\end{multline*}
Clearly
\[
  \abs{\partial\tilde\varphi_{\tilde s}(z-t\nabla\tilde\delta_\Omega(z))-\partial\varphi_{t,\tilde s}(z)}\leq O\left(t\norm{\tilde\varphi_{\tilde s}}_{C^2(\tilde U_{\tilde s})}\norm{\tilde\delta_\Omega}_{C^1(U)}\right),
\]
so we may use \eqref{eq:basic_inequality} with $\epsilon=\frac{(1- \tilde s)s}{\tilde s(1-s)}$ to obtain
\begin{multline*}
  \abs{2\alpha_\Omega(\tau)-\partial\tilde\varphi_{\tilde s}(\tau)|_{z-t\nabla\tilde\delta_\Omega(z)}}^2\geq\frac{(1- \tilde s)s}{\tilde s(1-s)}\abs{2\alpha_\Omega(\tau)-\partial\varphi_{t,\tilde s}(\tau)|_{z}}^2\\
  -O\left(\frac{(1- \tilde s)s t^2}{\tilde s-s}\norm{\tilde\varphi_{\tilde s}}_{C^2(\tilde U_{\tilde s})}^2\norm{\tilde\delta_\Omega}_{C^1(U)}^2|\tau|^2\right).
\end{multline*}
Hence,
\begin{multline*}
  N_{\tilde s}\abs{\tau}^2+\frac{s}{1-s}\abs{\partial\varphi_{t,\tilde s}(\tau)-2\alpha_\Omega(\tau)}^2-o(1)|\tau|^2\leq\\ i\ddbar\varphi_{t,\tilde s}(\tau,\bar\tau)+t^{-1}i\ddbar\tilde\delta_\Omega(\tau,\bar\tau)+2\beta_\Omega(\tau,\bar\tau).
\end{multline*}
on $b\Omega$ for all $\tau\in T^{1,0}(b\Omega)$.  If we choose $t$ small enough so that the error terms are bounded by $\frac{1}{2}N_{\tilde s}|\tau|^2$, then we have
\[
  \frac{1}{2}N_{\tilde s}\abs{\tau}^2+\frac{s}{1-s}\abs{\partial\varphi_{t,\tilde s}(\tau)-2\alpha_\Omega(\tau)}^2\leq i\ddbar\varphi_{t,\tilde s}(\tau,\bar\tau)+t^{-1}i\ddbar\tilde\delta_\Omega(\tau,\bar\tau)+2\beta_\Omega(\tau,\bar\tau).
\]
on $b\Omega$ for all $\tau\in T^{1,0}(b\Omega)$.  If we regularize $\varphi_{t,\tilde s}+t^{-1}\tilde\delta_\Omega$ by convolution in some neighborhood of $b\Omega$, we obtain $\varphi_s$ satisfying \eqref{eq:varphi_hessian_estimate_conclusion_smooth}.

Conversely, suppose that we are given $\varphi_s$ and $U_s$.  Since $\Omega$ is pseudoconvex, $i\ddbar\tilde\delta_\Omega(\tau,\bar\tau)\geq 0$ for all $\tau\in T^{1,0}(b\Omega)$.  For any $0<\eta<s$ and $-\frac{1}{2r^2}\left(\frac{1}{\eta}-\frac{1}{s}\right)<N_s<0$, \eqref{eq:varphi_hessian_estimate_hypothesis_smooth} will imply \eqref{eq:varphi_hessian_estimate_hypothesis} in some neighborhood of $b\Omega$.  Hence we obtain $\rho_\eta$ and $U_\eta$ by Proposition \ref{prop:C2_equivalence}.  Since $\rho_\eta$ is a $C^k$ multiple of $\tilde\delta_\Omega$, $\rho_\eta$ will be at least as smooth as $\tilde\delta_\Omega$, which is $C^k$ in a neighborhood of $b\Omega$ by \cite{KrPa81}.
\end{proof}

\begin{cor}
\label{cor:strong_DF_Index}
  Let $\Omega\subset\mathbb{C}^n$ be a bounded pseudoconvex domain with $C^k$ boundary for $k\geq 2$.  Then $DF_{C^k}(\Omega)=DF_s(\Omega)$.
\end{cor}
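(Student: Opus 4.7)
My plan is to derive the corollary by chaining the two implications in Proposition \ref{prop:Ck_equivalence} via their common boundary-weight characterization, with Lemma \ref{lem:psh_extensions} handling the final extension to all of $\Omega$.

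The easy direction $DF_{C^k}(\Omega) \leq DF_{C^2}(\Omega)$ is immediate from the inclusion $C^k(\overline\Omega) \subseteq C^2(\overline\Omega)$, and the case $k = 2$ is tautological by the definition $DF_s(\Omega) = DF_{C^2}(\Omega)$. For the substantive direction, when $k \geq 3$, I would fix an arbitrary $\eta$ with $0 < \eta < DF_{C^2}(\Omega)$ and insert two intermediate exponents $\eta < s < \tilde s < DF_{C^2}(\Omega)$. By definition there is a $C^2$ defining function $\rho_{\tilde s}$ with $-(-\rho_{\tilde s})^{\tilde s}$ plurisubharmonic on $\Omega$. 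Feeding $(\tilde s, \rho_{\tilde s})$ into Proposition \ref{prop:Ck_equivalence}(1) with target exponent $s < \tilde s$ produces a neighborhood $U_s$ of $b\Omega$ and a smooth weight $\varphi_s \in C^\infty(U_s)$ satisfying the strict boundary inequality \eqref{eq:varphi_hessian_estimate_conclusion_smooth}. Since $C^\infty \subseteq C^k$, this same $\varphi_s$ is valid input to Proposition \ref{prop:Ck_equivalence}(2) at the exponent pair $(\eta, s)$, returning a $C^k$ defining function $\rho_\eta$ for which $-(-\rho_\eta)^\eta$ is strictly plurisubharmonic on a one-sided neighborhood $U_\eta$ of $b\Omega$.

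To finish, I would apply Lemma \ref{lem:psh_extensions} to extend $-(-\rho_\eta)^\eta$ to a plurisubharmonic function on all of $\Omega$; inverting the $\eta$-th power then recovers a $C^k$ extension of $\rho_\eta$ to $\overline\Omega$ with the required bounds $-c\delta_\Omega \leq \rho_\eta \leq -\delta_\Omega$, witnessing $\eta \leq DF_{C^k}(\Omega)$. Since $\eta < DF_{C^2}(\Omega)$ was arbitrary, the reverse inequality follows. No real obstacle arises here: the heavy lifting is already buried in Proposition \ref{prop:Ck_equivalence}, and the argument above is essentially the observation that the boundary-weight characterization decouples the regularity of the weight from the exponent, so smoothness of $\varphi_s$ propagates to $C^k$ regularity of $\rho_\eta$. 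The only point worth briefly checking is that Lemma \ref{lem:psh_extensions} indeed preserves $C^k$ regularity on $\Omega \setminus b\Omega$ and that the resulting patched function agrees with $\rho_\eta$ near $b\Omega$, so that inverting the $\eta$-th power yields a globally $C^k$ defining function rather than something merely Lipschitz.
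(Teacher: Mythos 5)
Your proposal is correct and follows essentially the same route as the paper: chain Proposition \ref{prop:Ck_equivalence} $(1)$ and $(2)$ through intermediate exponents and then invoke Lemma \ref{lem:psh_extensions} to globalize, the only cosmetic difference being that you argue directly for every $\eta<DF_{C^2}(\Omega)$ while the paper phrases it as a contradiction with exponents chosen just above $DF_{C^k}(\Omega)$.
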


\begin{proof}
  This is trivial for $k=2$, so we assume $k\geq 3$.  Clearly $DF_{C^2}(\Omega)\geq DF_{C^k}(\Omega)$.  Suppose there exists $DF_{C^2}(\Omega)>\eta>DF_{C^k}(\Omega)$.  Then there exists a $C^2$ defining function $\rho_\eta$ for $\Omega$ such that $-(-\rho_\eta)^\eta$ is plurisubharmonic on $\Omega$.  Choose $DF_{C^k}(\Omega)<s<\eta$.  Let $U_s$ and $\varphi_s$ be given by Proposition \ref{prop:Ck_equivalence} $(1)$.  Choose $DF_{C^k}(\Omega)<\tilde\eta<s$.  Let $U_{\tilde\eta}$ and $\rho_{\tilde\eta}$ be given by Proposition \ref{prop:Ck_equivalence} $(2)$.  If we use Lemma \ref{lem:psh_extensions} to extend $-(-\rho_{\tilde\eta})^{\tilde\eta}$ to all of $\Omega$, we have a contradiction.
\end{proof}

\bibliographystyle{amsplain}
\bibliography{harrington}
\end{document}